\def\tto{\;{\lower 1pt \hbox{$\rightarrow$}}\kern -10pt
\hbox{\raise 2pt \hbox{$\rightarrow$}}\;}
\newtheorem{Theorem}{Theorem}[section]
\newtheorem{Proposition}[Theorem]{Proposition}
\newtheorem{Lemma}[Theorem]{Lemma}
\theoremstyle{definition}
\theoremstyle{remark}
\newtheorem{question}[Theorem]{Question}
\numberwithin{equation}{section}
\begin{document}
\title[Bregman nonexpansive type actions]{Bregman nonexpansive type actions of semitopological semigroups}

\author[Muoi]{Bui Ngoc Muoi}
\address[Bui Ngoc Muoi]{Department of Mathematics, Hanoi Pedagogical University 2, Vinh Phuc, Vietnam, and Department of Applied Mathematics, National Sun Yat-sen University, Kaohsiung, 80424, Taiwan.}
\email{\tt muoisp2@gmail.com}

\author[Wong]{Ngai-Ching Wong}
\address[Ngai-Ching Wong]{Department of Applied Mathematics, National Sun Yat-sen University, Kaohsiung, 80424, Taiwan.}
\email{\tt wong@math.nsysu.edu.tw}
%\email{muoisp2@gmail.com, wong@math.nsysu.edu.tw}

\thanks{This research is supported by the Taiwan MOST grant 108-2115-M-110-004-MY2.}

\thanks{Corresponding author: Ngai-Ching Wong, E-mail: wong@math.nsysu.edu.tw}
\date{}% ver. July 23, 2020, submitted to JNCA
%\thanks{This research is supported by the Taiwan MOST grant 108-2115-M-110-004-MY2.}
\dedicatory{Dedicated to Professor Anthony To-Ming Lau on the occasion of his retirement}

\begin{abstract}
Let $S$ be a semitopological semigroup, and let $C$ be a nonempty closed convex subset of a reflexive Banach space. Under some amenability conditions on $S$,
we provide existence results of fixed points for several Bregman nonexpansive type actions $S\times C\to C$,
$(s,x)\mapsto T_s x$, of $S$ on $C$.
The mappings $T_s$ we discuss include those being Bregman generalized hybrid, Bregman nonspreading, and Bregman left asymptotically nonexpansive.
\end{abstract}

\keywords{Semitopological semigroups, left invariant means,
	fixed points, Bregman generalized hybrid maps, Bregman nonspreading maps, Bregman left asymptotic nonexpansive maps.}

\subjclass[2010]{Primary 47H10; Secondary 47H20, 47H09.}

\maketitle

\section{Introduction}

Let $S$ be a \emph{semitopological semigroup}, i.e., $S$ is a semigroup with a (Hausdorff) topology such that for each fixed $t\in S$, the mappings $s\rightarrow ts$ and $s\rightarrow st$ are continuous.
An \emph{action} of $S$ on a  nonempty set $C$ is a mapping of $S\times C$ into $C$, denoted by
$(s,x) \mapsto T_sx$, such that $T_{st}x=T_s(T_tx)$ for all $s, t\in S$ and $x\in C$. A point $x_0\in C$ is called
a \emph{common fixed point} for $S$ if $T_sx_0=x_0$ for all $s\in S$.

Let $X$ be a left  translation invariant normed vector subspace of $\ell^\infty(S)$, i.e., $l_sf\in X$ for all $s\in S, f\in X$,
where the \emph{left translation} $l_sf$ is given by $l_sf(t)=f(st), \forall t\in S$.  Assume
also that $X$ contains all the constant functions.
For example, the Banach algebra $\operatorname{CB(S)}$ of continuous and bounded functions on $S$ is a
left translation invariant subspace of $\ell^\infty(S)$
containing constants.
 A bounded linear functional $\mu$ on $X$ is called a \emph{mean} if $\|\mu\|=\mu(1)=1$.
  A mean $\mu$ is \emph{left invariant}, or a $\operatorname{LIM}$ in short, if $\mu(f)=\mu(l_sf)$ for all $s\in S$ and $f\in X$.
 A mean $\mu$ is called \emph{multiplicative} if $X$ is a
 subalgebra of  $\ell^\infty(S)$ and $\mu(fg)=\mu(f)\mu(g), \forall f,g\in X$.
We call $S$ \emph{left amenable} (resp.\ \emph{extremely left amenable}) if $\operatorname{CB(S)}$ has a left invariant mean
(resp.\ multiplicative left invariant mean).

It is a classical result of Mitchell \cite{Mit68} that %$\operatorname{CB(S)}$ has a multiplicative left invariant mean
$S$ is extremely left amenable if and only if every continuous action of $S$ on a compact set has a common fixed point.
When $S$ has only a non-multiplicative left invariant mean instead, however,
we need some nonexpansiveness of the action to guarantee a fixed point.

Lau and Takahashi \cite{LauTaka95} considered
\emph{left asymptotically nonexpansive} actions, while
Lau and Zhang \cite{LauZhang16} considered \emph{generalized hybrid} actions,  of a left amenable semigroup  $S$ on a nonempty set $C$ in
Hilbert space. They showed that such an action always has a common fixed point whenever there exists a bounded orbit
$$
O_c:=\{T_sc: s\in S\}
$$
of some $c$ in $C$. On the other hand,
we  studied norm-nonexpansive type actions on Banach spaces in \cite{MuoiWong2020}, and
metric- and seminorm-nonexpansive type actions on  Fr\'{e}chet and general locally convex spaces in \cite{MuoiWong2020JNCA}.

In this paper, we consider
nonexpansive type actions with respect to {Bregman distances} on reflexive Banach spaces with bounded orbits.
We note that Bregman distances (also called Bregman divergence),
though not being symmetric or satisfying the triangle inequality in general, are recently popular and useful in the
quantum information theory (see, e.g., \cite{CGLMVW17}).

Fix a {strictly convex} and {G\^{a}teaux differentiable} function $g: U\to\mathbb{R}$ defined
on an open set $U$ in a Banach space $E$ (see section \ref{section2} for
definitions and notations). The Bregman distance (see, e.g., \cite{Bregman67}) $D_g$ on $U$ is defined by
\begin{equation*}
D_g(x,y)=g(x)-g(y)-\left\langle x-y,\nabla g(y)\right\rangle,\; \forall x,y\in U.
\end{equation*}

\noindent
Let $S$ be a semitopological semigroup.
Let $C$ be a nonempty subset of $U$.
An action $(s,x) \mapsto T_sx$ of $S$ on $C$ is called
\begin{itemize}
	\item \textit{Bregman nonexpansive} if $$D_g(T_sx,T_sy)\leq D_g(x,y),\quad \forall x,y\in C,  \forall s\in S;$$
	\item \textit{Bregman left asymptotically nonexpansive} if for given $\varepsilon>0$ and $y\in C$, there exists an $s_\epsilon$ in $S$ depending on $\varepsilon$ and also on $y$, such that
%for the left ideal $J=St$ of $S$ we have
	\begin{equation}\label{BregLeftAsym}
	D_g(T_{ss_\epsilon}x,T_{ss_\epsilon}y)\leq D_g(x,y)+\varepsilon,\quad \forall s\in S, \forall x\in C;
	\end{equation}
	\item \textit{Bregman nonspreading} \cite{NWY2014} if \begin{equation}\label{BregNonspred}
	D_g(T_sx,T_sy)+D_g(T_sy,T_sx)\leq D_g(T_sx,y)+D_g(T_sy,x),\quad \forall x,y\in C, \forall s\in S;
	\end{equation}
	\item \textit{Bregman generalized hybrid} \cite{KJHH12} if there are real numbers $\alpha, \beta$ such that
	\begin{equation}\label{BregHybrid}
	\alpha D_g(T_sx,T_sy)+(1-\alpha)D_g(x,T_sy)\leq\beta D_g(T_sx,y)+(1-\beta)D_g(x,y),\quad \forall x,y\in C,  \forall s\in S.	
	\end{equation}
\end{itemize}

 It is plain that nonexpansive, left asymptotically nonexpansive \cite{LauTaka95}, nonspreading \cite{kt1,KohTaka08,twy}
 and generalized hybrid \cite{tak1} maps defined on Hilbert spaces are exactly those Bregman nonexpansive,
 Bregman left asymptotically nonexpansive, Bregman nonspreading and Bregman generalized hybrid maps
 with respect to the Bregman distance $D_g$ with $g(x)=\|x\|^2$.
  We also note that all Bregman nonexpansive mappings are Bregman left asymptotically nonexpansive and Bregman generalized hybrid,
 while Bregman nonspreading might be neither Bregman nonexpansive nor continuous; see, e.g., {\cite[example 1]{NWY2014}}.

We will show that any action  $(s,x)\mapsto T_s x$ of a left amenable semitopological semigroup $S$
 on a nonempty closed and convex subset $C$ of a reflexive Banach space $E$ has a common
fixed point, provided that all $T_s$ carry any one of the above Bregman nonexpansiveness and there is a bounded orbit $O_c$
of some point $c\in C$.

Here is a sketch of our reasoning.
 Let $X$ be a left translation invariant subspace of $\ell^\infty(S)$ containing  constants and all coordinate functions
  $s\mapsto \left\langle T_sc,x^*\right\rangle$ with $x^*\in E^*$.
  Let $\mu$ be a left invariant mean on $X$. Then for the bounded linear functional $x^*\mapsto \mu_s \left\langle T_sc,x^*\right\rangle$,
  there exists $z_\mu\in E=E^{**}$ such that
  $\left\langle z_\mu,x^*\right\rangle=\mu_s \left\langle T_sc,x^*\right\rangle$.
  We  pretend for a moment that $\mu$ is a probability measure on $S$.  Then we could write $$z_\mu=\int_S T_s c\, d\mu(s).$$
Note that the point $z_\mu\in C$ if $C$ is closed and convex.
For each $t\in S$, we would have
  $$
  T_tz_\mu=T_t\int_ST_sc\, d\mu(s)=\int_ST_{ts}c\, d\mu(s)=\int_ST_sc\, d\mu(s)=z_\mu,
  $$
since $\mu$ is left invariant.  In other words,
 $z_\mu$ is a candidate of the common fixed point for $S$. In the following, we will verify this claim.

 In section \ref{section2}, we describe briefly the Bregman distances on Banach spaces and their properties.
With respect to these distances, in section \ref{section3}, we study various Bregman type nonexpansive actions of a left
 amenable semitopological semigroup on a nonempty closed convex subset of a reflexive Banach space.
 We show that such an action has a common fixed point, if there is a bounded orbit and the left translation
 invariant function space $X$ generated by
 the action on
  the orbit  has a $\operatorname{LIM}$. The uniqueness of the fixed point is also discussed.
In section \ref{sectionBregSubspace}, we study the problem when the underlying function space $X$ has a $\operatorname{LIM}$ by embedding $X$ into some classical function spaces on $S$.
Finally, in section \ref{problems} we discuss some possible resolves for the case when we do not have any $\operatorname{LIM}$ in stock.

\section{Bregman distances}\label{section2}

Let $U$ be a nonempty open set in a Banach space $E$. A function $g: U\to\mathbb{R}$ is said to be \emph{G\^{a}teaux differentiable} at $y$ if there
is a bounded linear functional $\nabla g(y)$ in $E^*$, called the \emph{gradient} of $g$ at $y$, such that
$$
\left\langle x,\nabla g(y)\right\rangle=\displaystyle\lim_{t\rightarrow 0}\frac{g(y+tx)-g(y)}{t},\quad \forall x\in E.
$$
We call $g$ \textit{Fr\'{e}chet differentiable} at $y$ if for each given $\varepsilon>0$, there exists $\delta>0$ such that
$$
|g(x)-g(y)-\left\langle x-y,\nabla g(y)\right\rangle|\leq\varepsilon \|x-y\|\quad \mbox{ whenever } \|x-y\|\leq \delta.
$$
We call $g$ \emph{strictly convex} if $g(\alpha x+(1-\alpha)y)<\alpha g(x)+(1-\alpha)g(y)$ for all distinct $x,y\in U$ and $\alpha\in (0,1)$.

Let $g: U\to\mathbb{R}$ be a strictly convex and G\^{a}teaux differentiable function on a  nonempty open set $U$ in a Banach space $E$. The
\emph{Bregman distance} $D_g$ on $U$ is defined by
\begin{equation}\label{defBregDis}
D_g(x,y)=g(x)-g(y)-\left\langle x-y,\nabla g(y)\right\rangle,\quad \forall x,y\in U.
\end{equation}
It is known that if a strictly convex function $g$ is
 G\^{a}teaux differentiable then its gradient $\nabla g$ is norm-to-weak* continuous  (see, e.g., \cite[Proposition 1.1.10]{ButIu00}) and
 $
 D_g(x,y)\geq 0,\ \forall x,y\in U;
 $
 the equality holds exactly when $x=y$.
 If $g$ is Fr\'{e}chet differentiable then $\nabla g$ is norm-to-norm continuous (see, e.g., \cite[page 508]{kt3}).

Some Bregman distances between positive definite matrices used in quantum information theory  associated with the Bregman function $g(A)=\operatorname{trace}(f(A))$ follows:

\begin{itemize}
		\item \textbf{classical divergence:} $D_g(A,B)=\operatorname{trace}(A^2)+\operatorname{trace}(B^2)-2\operatorname{trace}(BA)$ while $f(x)=x^2$;
		\item \textbf{Umegaki relative entropy:} $D_g(A,B)=\operatorname{trace}\left[A(\log A-\log B)\right]$ while $f(x)=x\log x$;
		\item \textbf{Quantum divergence:} $D_g(A,B)=\|\sqrt{A}-\sqrt{B}\|^2_2$\; while $f(x)=(\sqrt{x}-1)^2$, where $\|\cdot\|_2$ is
the Hilbert-Schmidt norm of matrices.
\end{itemize}
In the following demonstrations, $E=\mathbb{R}$, $U=(0,+\infty)$ and thus $g=f$.
\bigskip
%\vspace{1cm}
\begin{center}
		\includegraphics[scale=0.17]{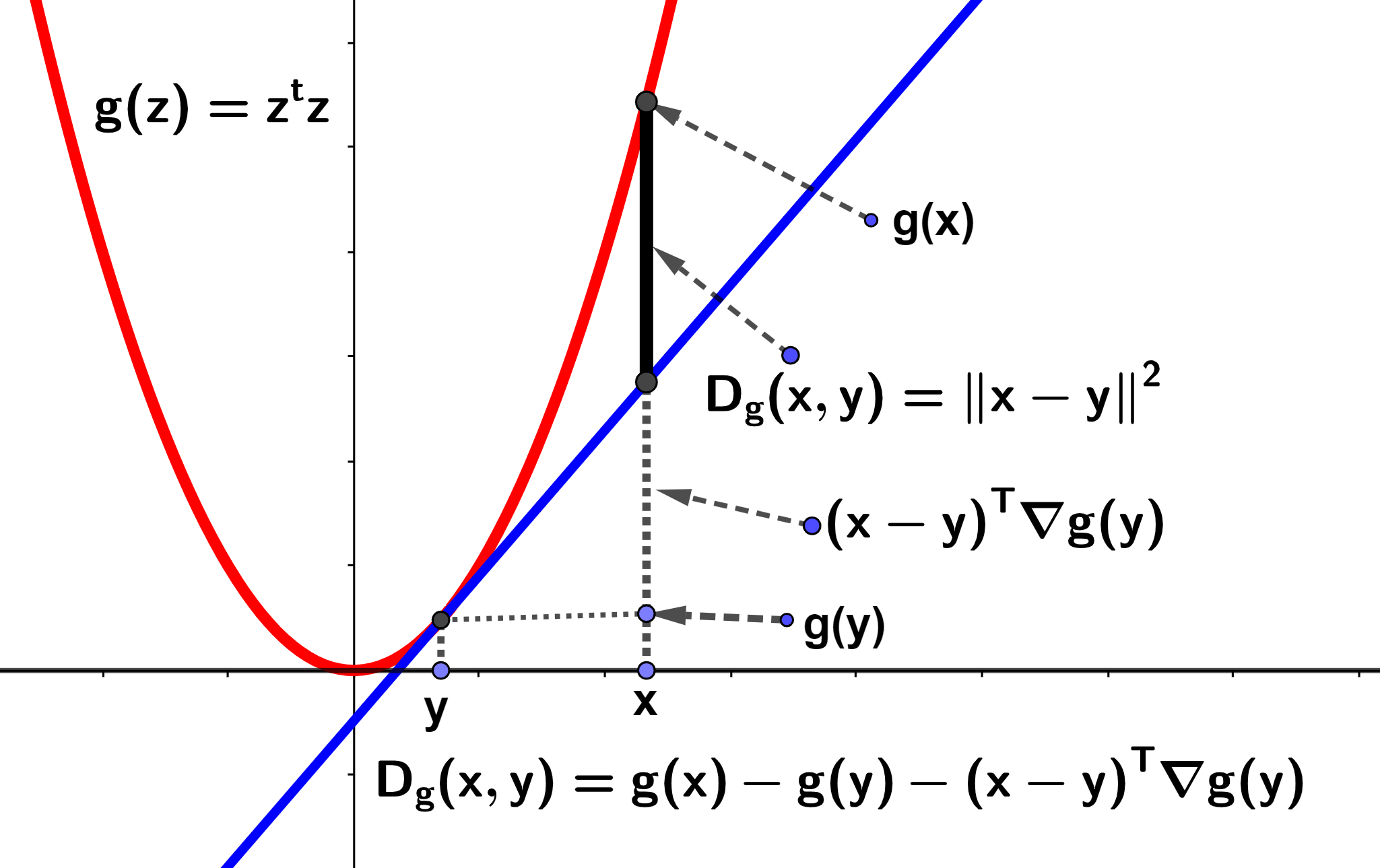}
	\captionof*{figure}{ \textbf{Figure 1:} The Bregman classical divergence is exactly the square Hilbert space distance.}
\end{center}

\bigskip

\begin{center}
	{	\hspace{0.3cm}
		\centering
		\includegraphics[scale=0.12]{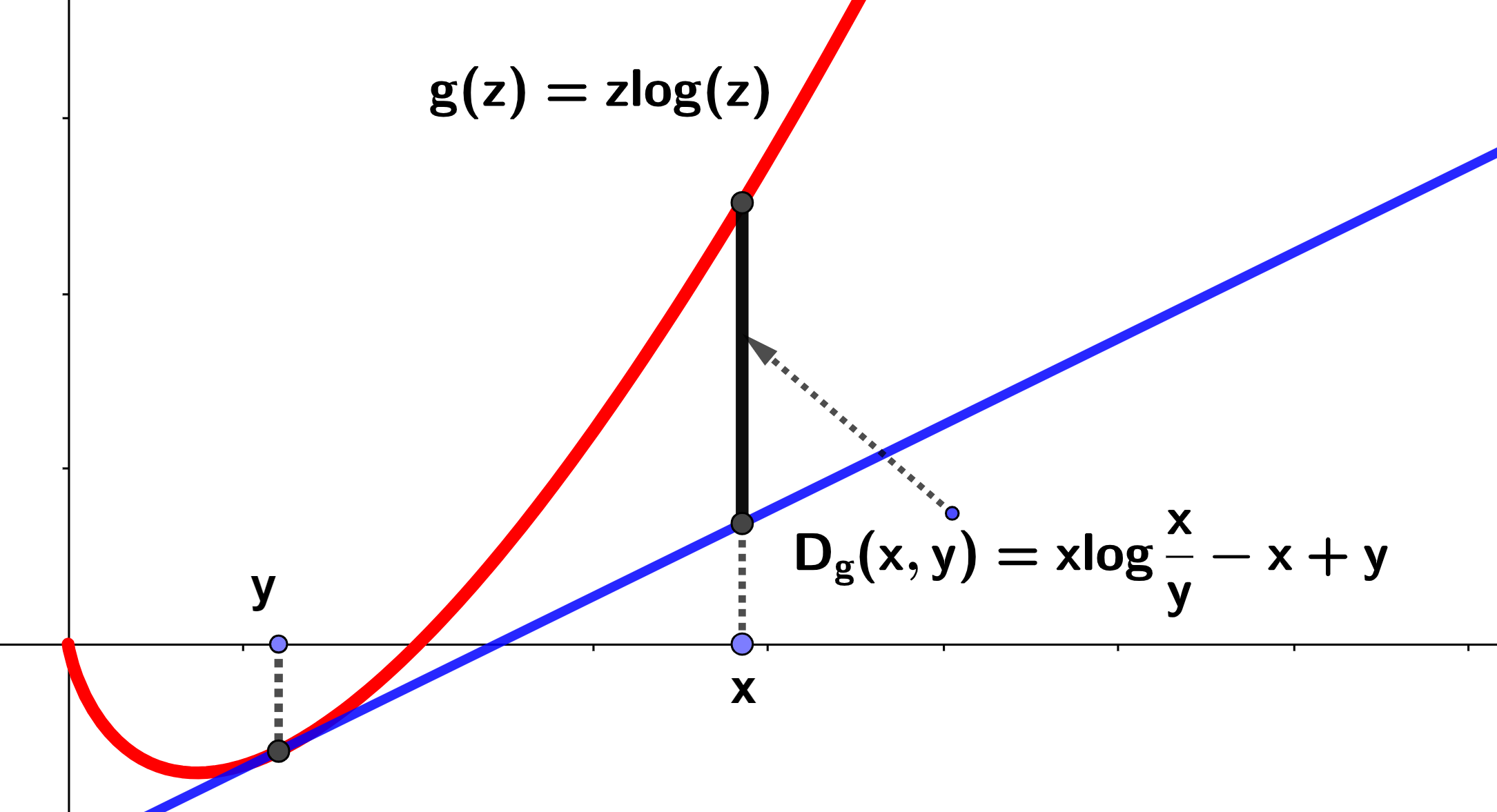} 	
	}
	{ \hspace{0.3cm}
		\centering
		\includegraphics[scale=0.133]{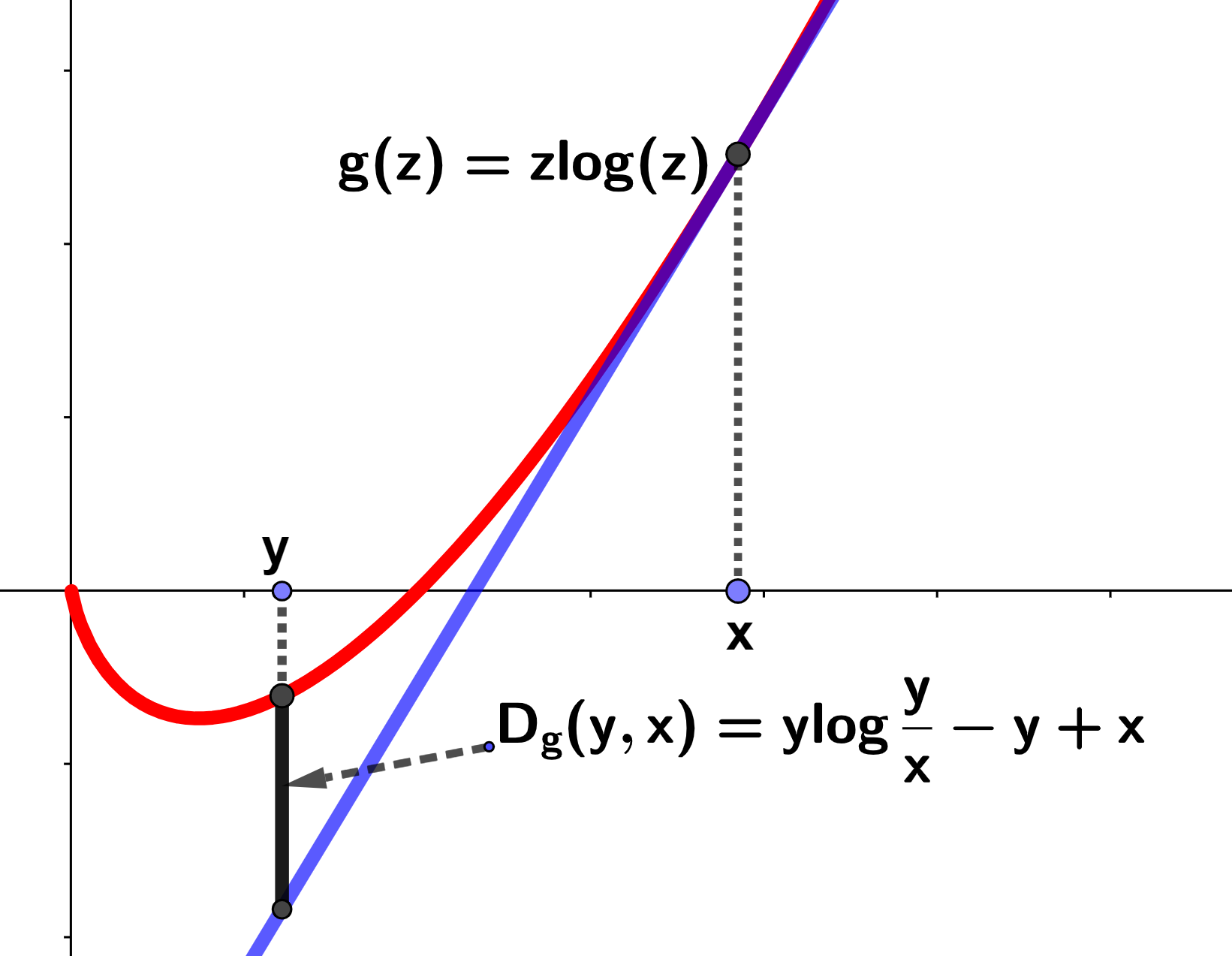}
	}
	\captionof*{figure}{ \textbf{Figures 2, 3:} Umegaki relative entropy is not symmetric, $D_g(x,y)>D_g(y,x)$.}
\end{center}

\bigskip

Although the Bregman distances $D_g(x,y)$ have deficiencies such as not necessarily being symmetric, not necessarily satisfying
the triangle inequality and not necessarily translation invariant, they do carry the \textit{Bregman-Opial property}.
That is, for any weakly convergent sequence $x_n\to  x$ in $U$, we have
$$
\displaystyle\limsup_{n\rightarrow\infty}D_g(x_n,x)<\limsup_{n\rightarrow\infty}D_g(x_n,y),\quad \forall y\in U\setminus\{x\}.
$$
For the function $g(x)=\|x\|^2$ on a Hilbert space, this reduces to the famous Opial property.
However, unlike the Bregman-Opial property, the Opial property fails to hold  in the general Banach space setting.
See, e.g., \cite{hjkh}.

We call a function $g: E\to\mathbb{R}$ \textit{strongly coercive} if $\displaystyle\lim_{\|x_n\|\to\infty}\frac{g(x_n)}{\|x_n\|}=+\infty$,
call $g$ \emph{locally bounded} if $g$ is bounded on bounded sets, and call
$g$  a \emph{Bregman function} \cite{ButIu00} if it satisfies the following conditions.
\begin{itemize}
	\item [(1)] $g$ is continuous, strictly convex and G\^{a}teaux differentiable.
	\item [(2)] The set $\{y\in E: D_g(x,y)\leq r\}$ is bounded for all $x\in E$ and all $r>0$.
\end{itemize}

\begin{Lemma}[see {\cite[page 70]{ButIu00}}]\label{LemmaBregProj}
	Let $C$ be a nonempty, closed and convex subset of $E$. Let $g$ be a strongly coercive Bregman function on $E$, then for each $x\in E$, there exists a unique point  $\hat{x}\in C$ such that $$D_g(\hat{x},x)=\displaystyle\min_{y\in C}D_g(y,x).$$   In this case,  $D_g(y,\hat{x})\leq D_g(y,\hat{x})+D_g(\hat{x},x)\leq D_g(y,x)$ for all $y\in C.$
\end{Lemma}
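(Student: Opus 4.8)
The plan is to fix $x\in E$ and minimize the function $\phi(y):=D_g(y,x)$ over $y\in C$ by the direct method, then read the projection inequality off a first-order optimality condition. First I would record that $\phi$ is \emph{strictly convex} and continuous: writing $\phi(y)=g(y)-g(x)-\langle y-x,\nabla g(x)\rangle$ shows that $\phi$ differs from $g$ only by an affine term, so both strict convexity and continuity of $\phi$ follow from condition (1) in the definition of a Bregman function. Being convex and continuous, $\phi$ is weakly lower semicontinuous, and since $C$ is closed and convex it is weakly closed by Mazur's theorem.

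The key step for existence is coercivity of $\phi$, and this is exactly where the hypothesis that $g$ is \emph{strongly coercive} enters. I would show $D_g(y,x)\to+\infty$ as $\|y\|\to\infty$ by estimating
$$
\frac{D_g(y,x)}{\|y\|}=\frac{g(y)}{\|y\|}-\frac{g(x)}{\|y\|}-\frac{\langle y-x,\nabla g(x)\rangle}{\|y\|},
$$
where the first term tends to $+\infty$ by strong coercivity while the other two stay bounded (the last because $|\langle y-x,\nabla g(x)\rangle|\le(\|y\|+\|x\|)\|\nabla g(x)\|$). Hence every sublevel set $\{y\in C:\phi(y)\le r\}$ is bounded. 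Taking a minimizing sequence $y_n\in C$ with $\phi(y_n)\to m:=\inf_{y\in C}\phi(y)$, boundedness together with reflexivity of the ambient space $E$ yields a weakly convergent subsequence $y_{n_k}\rightharpoonup\hat{x}$ with $\hat{x}\in C$; weak lower semicontinuity then gives $\phi(\hat{x})\le m$, so $\hat{x}$ attains the minimum. Uniqueness is immediate from strict convexity: if $\hat x_1\ne\hat x_2$ both attained $m$, the midpoint $\tfrac{1}{2}(\hat x_1+\hat x_2)\in C$ would satisfy $\phi(\tfrac{1}{2}(\hat x_1+\hat x_2))<m$, a contradiction.

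For the projection inequality I would use the \emph{three-point identity} for Bregman distances,
$$
D_g(u,v)+D_g(v,w)-D_g(u,w)=\langle u-v,\nabla g(w)-\nabla g(v)\rangle,
$$
which follows by expanding the three distances via \eqref{defBregDis} and cancelling the $g$-terms. Applying it with $u=y$, $v=\hat{x}$, $w=x$ reduces the claim $D_g(y,\hat{x})+D_g(\hat{x},x)\le D_g(y,x)$ to the variational inequality $\langle y-\hat{x},\nabla g(\hat{x})-\nabla g(x)\rangle\ge0$ for all $y\in C$. To establish this I would use that $\hat{x}$ minimizes $\phi$ over the convex set $C$: for $y\in C$ and $t\in(0,1)$ the point $\hat{x}+t(y-\hat{x})\in C$, so $\phi(\hat{x}+t(y-\hat{x}))\ge\phi(\hat{x})$, and dividing by $t$ and letting $t\downarrow0$ gives $\langle y-\hat{x},\nabla\phi(\hat{x})\rangle\ge0$ with $\nabla\phi(\hat{x})=\nabla g(\hat{x})-\nabla g(x)$. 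The trivial first inequality $D_g(y,\hat x)\le D_g(y,\hat x)+D_g(\hat x,x)$ is just nonnegativity of $D_g$.

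I expect the main obstacle to be the existence step, and more precisely the boundedness of the sublevel sets of $y\mapsto D_g(y,x)$. Condition (2) in the definition of a Bregman function only guarantees boundedness of sublevel sets in the \emph{second} argument, $\{y:D_g(x,y)\le r\}$, whereas here we minimize in the \emph{first} argument; bridging this asymmetry is exactly the role of strong coercivity, so the care lies in verifying that this hypothesis is invoked correctly rather than in any deep difficulty. The remaining ingredients — strict convexity, the three-point identity, and the differentiation of the optimality condition — are routine once \G{} differentiability of $g$ is in hand.
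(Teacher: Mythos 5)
The paper does not prove this lemma at all: it is imported verbatim from Butnariu--Iusem \cite[page 70]{ButIu00}, so your proposal is being compared against a citation rather than an in-paper argument. Your reconstruction is correct and is essentially the standard proof underlying that reference: the direct method (the function $\phi(y)=D_g(y,x)$ differs from $g$ by an affine term, hence inherits continuity and strict convexity; strong coercivity of $g$ forces $\phi(y)/\|y\|\to+\infty$; Mazur's theorem gives weak closedness of $C$ and weak lower semicontinuity of $\phi$; a bounded minimizing sequence then has a weakly convergent subsequence), uniqueness by strict convexity, and the descent inequality by combining the three-point identity (the paper's \eqref{BregThreeProint}, which matches the identity you derive) with the first-order condition $\left\langle y-\hat{x},\nabla g(\hat{x})-\nabla g(x)\right\rangle\geq 0$, obtained correctly since $\nabla\phi(\cdot)=\nabla g(\cdot)-\nabla g(x)$. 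I checked the algebra reducing $D_g(y,\hat{x})+D_g(\hat{x},x)\leq D_g(y,x)$ to this variational inequality and it is sound, and your diagnosis that condition (2) of a Bregman function only bounds sublevel sets in the \emph{second} argument, so that strong coercivity is precisely what makes minimization in the \emph{first} argument work, is exactly right. The one hypothesis you invoke that the lemma as stated in section \ref{section2} does not announce is reflexivity of $E$: there $E$ is merely ``a Banach space,'' and without reflexivity the weakly convergent subsequence, hence existence of $\hat{x}$, is unavailable. This is harmless in context---every application of the lemma in section \ref{section3} assumes $E$ reflexive, and the cited source works in that setting---but your proof, like the one in \cite{ButIu00}, genuinely needs it, so the lemma should be read with reflexivity as a standing assumption.
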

We call $\hat{x}$ the \emph{Bregman projection} of $x$ on $C$ and denote it by $P_C^g(x)$.

\section{Fixed point properties for Bregman nonexpansive type actions}\label{section3}

In this section, we always consider an action $S\times C\to C$, denoted by $(s,x)\mapsto T_s x$,
of a semitopological semigroup $S$ on a nonempty closed convex subset $C$ of a reflexive Banach space $E$.
Moreover, we always assume $g:E\to \mathbb{R}$ is a strongly coercive and locally bounded Bregman function.

Assume that there exists a point $c\in C$ such that the orbit $O_c=\{T_sc: s\in S\}$ is   bounded.
Let $X$ be the intersection of all left translation invariant subspaces of $\ell^\infty(S)$, which contains
all the constant functions and all the coordinate functions
\begin{equation}\label{BregCoefunction}
s\to \left\langle T_sc,x^*\right\rangle
\quad \mbox{ and }\quad s\to D_g(T_sc,x),\quad \forall x\in E,\ x^*\in E^*
\end{equation}
generated by the orbit $O_c$.
We call $X$ the \emph{Bregman subspace} associated to the action on the bounded orbit $O_c$. Note that the local boundedness of $g$ ensures that the functions in \eqref{BregCoefunction} are bounded on $S$.
For any mean $\mu$ on $X$, we consider the bounded linear functional of the dual space $E^*$ of $E$
defined  by $x^*\mapsto \mu_s\left\langle T_sc,x^*\right\rangle$.
We shall write $\mu_s(f(s))$ instead of $\mu(f)$ for the value of $\mu$ at the function $s\mapsto f(s)$ in $s$.
Since $E$ is reflexive, there exists a unique $z_\mu\in E$ such that
\begin{equation}\label{PsiFunctional}
\mu_s\left\langle T_sc,x^*\right\rangle=\left\langle z_\mu,x^*\right\rangle, \quad \forall x^*\in E^*.
\end{equation}
We call $z_\mu$ the \emph{$\mu-$barycenter} of the bounded orbit $O_c$.

We are going to show that $z_\mu$ is a common fixed point of the action, provided
that some Bregman type nonexpansiveness is assumed.
As an intermediate step, we will show that $z_\mu$ is a Bregman attractive point, namely it belongs to
 the set of \emph{Bregman attractive points}  defined by
\begin{equation}\label{BregAttrac}
A^g_C(S)=\{x\in E: D_g(x,T_sy)\leq D_g(x,y), \forall y\in C, \forall s\in S\}.
\end{equation}
Clearly, $z_\mu$ is a common fixed point of the action exactly when $z_\mu\in A^g_C(S)\cap C$.

The following lemma can be deduced from \cite[page 209]{TWY14}, we give a different proof since its arguments will be vital for some later parts.
\begin{Lemma}\label{Lemmazmu}
	If $C$ is closed and convex then $z_\mu\in C$.
\end{Lemma}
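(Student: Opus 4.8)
The plan is to argue by contradiction, exploiting the fact that a closed convex set in a Banach space is weakly closed together with the positivity of a mean. First I would record the basic facts I intend to use: since the action maps $S\times C$ into $C$ and $c\in C$, the whole orbit satisfies $O_c=\{T_sc:s\in S\}\subseteq C$; and since $X$ is the Bregman subspace it contains, by construction, every coordinate function $s\mapsto\langle T_sc,x^*\rangle$ with $x^*\in E^*$, so $\mu$ may legitimately be applied to each of them. I would also note the monotonicity of any mean: because $\|\mu\|=\mu(1)=1$, one has $\inf_s f(s)\le\mu_s(f(s))\le\sup_s f(s)$ for every real-valued $f\in X$ (e.g.\ if $m\le f\le M$ with $M>m$, then $0\le (f-m)/(M-m)\le 1$, and $\|2(f-m)/(M-m)-1\|_\infty\le 1$ forces $0\le\mu_s((f-m)/(M-m))\le 1$, i.e.\ $m\le\mu_s(f)\le M$).

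Then I would suppose $z_\mu\notin C$. Since $C$ is nonempty, closed and convex, the Hahn--Banach separation theorem yields $x^*\in E^*$ and $\alpha\in\mathbb{R}$ with
$$\langle y,x^*\rangle\le\alpha<\langle z_\mu,x^*\rangle,\quad\forall y\in C.$$
In particular $\langle T_sc,x^*\rangle\le\alpha$ for every $s\in S$, because $T_sc\in O_c\subseteq C$. Applying the mean $\mu$ to the real-valued function $s\mapsto\langle T_sc,x^*\rangle\in X$ and using its monotonicity together with the defining relation \eqref{PsiFunctional}, I obtain
$$\langle z_\mu,x^*\rangle=\mu_s\langle T_sc,x^*\rangle\le\sup_s\langle T_sc,x^*\rangle\le\alpha,$$
which contradicts $\alpha<\langle z_\mu,x^*\rangle$. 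Hence $z_\mu\in C$.

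Conceptually the whole argument says nothing more than $z_\mu\in\overline{\mathrm{co}}^{\,w}(O_c)\subseteq C$: the barycenter lies in the weak closure of the convex hull of the orbit, and this set is contained in $C$ precisely because $C$ is convex and, being closed and convex, weakly closed. I expect the only point requiring care to be the positivity/monotonicity of the mean, which is standard but must be invoked explicitly since the separating functional $x^*$ is only controlled through its one-sided bound on $C$; everything else is a direct application of Hahn--Banach. Notably, this reasoning uses only that $\mu$ is a mean, not that it is left invariant, so the same mechanism will be available later when invariance is brought in to push $z_\mu$ into the attractive set $A^g_C(S)$.
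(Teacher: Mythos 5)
Your proof is correct, but it takes a genuinely different route from the paper's. You run the classical barycenter argument: $O_c\subseteq C$, the positivity/monotonicity of the mean (correctly derived from $\|\mu\|=\mu(1)=1$), and Hahn--Banach strict separation of the point $z_\mu$ from the weakly closed convex set $C$ --- in effect showing $z_\mu\in\overline{\mathrm{co}}(O_c)\subseteq C$. This is more elementary than the paper's argument: it uses none of the Bregman machinery, and in particular does not need $g$ to be a strongly coercive Bregman function, nor the existence of the Bregman projection of Lemma \ref{LemmaBregProj}; it only needs the coordinate functions $s\mapsto\langle T_sc,x^*\rangle$ in $X$, not the functions $s\mapsto D_g(T_sc,x)$. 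The paper instead applies $\mu$ to the three-point identity \eqref{BregThreeProint} to obtain the key identity $\mu_sD_g(T_sc,x)-\mu_sD_g(T_sc,z_\mu)=D_g(z_\mu,x)$ in \eqref{eq222}, concludes that the minimizing set $M_\mu$ in \eqref{eq2} is exactly $\{z_\mu\}$, and then shows $P^g_C(z_\mu)=z_\mu$ via Lemma \ref{LemmaBregProj}. The authors are explicit about why they chose this heavier path: the lemma itself is known (they cite \cite{TWY14}), but the by-products \eqref{eq222} and \eqref{eq2} are reused later --- \eqref{eq222} in the proof of Theorem \ref{mainthm3} and \eqref{eq2} in Proposition \ref{independence} --- so your shorter proof, while perfectly valid for the statement as posed, would not supply those ingredients. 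Your closing observation that only the mean property (not left invariance) is used agrees with the paper's proof, which likewise never invokes invariance in this lemma.
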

\begin{proof}
	From \eqref{defBregDis}, the following Bregman \textit{three-point identity} holds for any $x,y,z\in E$,
	\begin{equation}\label{BregThreeProint}
	D_g(x,z)=D_g(x,y)+D_g(y,z)+\left\langle x-y,\nabla g(y)-\nabla g(z)\right\rangle.
	\end{equation}
	Therefore, for each $x\in E$ we have $$D_g(T_sc,x)-D_g(T_sc,z_\mu)=D_g(z_\mu,x)+\left\langle T_sc-z_\mu,\nabla g(z_\mu)-\nabla g(x)\right\rangle.$$
	Taking $\mu_s$ on both sides,
	\begin{equation}\label{eq222}
	\begin{array}{rl}
	\mu_s D_g(T_sc,x)-\mu_s D_g(T_sc,z_\mu)&=D_g(z_\mu,x)+\mu_s \left\langle T_sc-z_\mu,\nabla g(z_\mu)-\nabla g(x)\right\rangle\\
	&=D_g(z_\mu,x)+\left\langle z_\mu-z_\mu,\nabla g(z_\mu)-\nabla  g(x)\right\rangle\\
	&=D_g(z_\mu,x)\geq 0.
	\end{array}
	\end{equation}
	Since $D_g(z_\mu,x)=0$ exactly when $x=z_\mu$, we have
	\begin{equation}\label{eq2}
	M_\mu:=\{x\in E: \mu_sD_g(T_sc,x)=\displaystyle\inf_{y\in E}\mu_sD_g(T_sc,y)\}=\{z_\mu\}.
	\end{equation}

	Let $P^g_C(z_\mu)$ be the Bregman projection of $z_\mu$ on $C$. By Lemma \ref{LemmaBregProj}, since $T_sc\in C$ we have
	$D_g(T_{s}c,P^g_C(z_\mu))\leq D_g(T_{s}c,z_\mu).$
	Therefore, $\mu_s D_g(T_{s}c,P^g_C(z_\mu))\leq \mu_s D_g(T_{s}c,z_\mu)$.
	Since $z_\mu$ is the unique element of $M_\mu$, we have $P^g_C(z_\mu)=z_\mu$. Hence, $z_\mu\in C$.	
\end{proof}

\begin{Theorem}\label{mainThm}
	Let $C$ be a closed convex subset of a reflexive Banach space $E$. Let $(s,x)\mapsto T_s x$ be a
Bregman generalized hybrid action of a
semitopological semigroup $S$ on $C$ with a bounded orbit $O_c$.
If the Bregman subspace $X$ associated to $O_c$ has a left invariant mean $\mu$,
then the $\mu-$barycenter $z_\mu$ of $O_c$ is a common fixed point of $S$.
\end{Theorem}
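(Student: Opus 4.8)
The plan is to show that the $\mu$-barycenter $z_\mu$ belongs to the set $A^g_C(S)$ of Bregman attractive points in \eqref{BregAttrac}. Lemma \ref{Lemmazmu} already gives $z_\mu\in C$, and as noted just before the theorem, $z_\mu\in A^g_C(S)\cap C$ is equivalent to $z_\mu$ being a common fixed point: taking $y=z_\mu$ in the defining inequality of $A^g_C(S)$ yields $D_g(z_\mu,T_sz_\mu)\le D_g(z_\mu,z_\mu)=0$, whence $T_sz_\mu=z_\mu$ for every $s\in S$. So membership in $A^g_C(S)$ is all that remains.

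The crucial reduction is the identity already isolated in \eqref{eq222}, which I would read as
\[
D_g(z_\mu,x)=\mu_sD_g(T_sc,x)-\mu_sD_g(T_sc,z_\mu),\qquad\forall x\in E.
\]
Because the subtracted term is a constant independent of $x$, the inequality $D_g(z_\mu,T_ry)\le D_g(z_\mu,y)$ that characterizes $z_\mu\in A^g_C(S)$ is equivalent to the single averaged inequality
\[
\mu_sD_g(T_sc,T_ry)\le\mu_sD_g(T_sc,y),\qquad\forall y\in C,\ \forall r\in S,
\]
where I used that $T_ry\in C\subseteq E$, so that $s\mapsto D_g(T_sc,T_ry)$ is a generator of $X$. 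Thus the theorem reduces entirely to this averaged inequality.

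To prove it, I would apply the Bregman generalized hybrid inequality \eqref{BregHybrid} with semigroup element $r$, first argument $T_sc$, and second argument $y$. Using $T_r(T_sc)=T_{rs}c$, this gives, for every $s\in S$,
\[
\alpha D_g(T_{rs}c,T_ry)+(1-\alpha)D_g(T_sc,T_ry)\le\beta D_g(T_{rs}c,y)+(1-\beta)D_g(T_sc,y).
\]
All four functions of $s$ here lie in the Bregman subspace $X$: the functions $s\mapsto D_g(T_sc,T_ry)$ and $s\mapsto D_g(T_sc,y)$ are generators, and their left translates $s\mapsto D_g(T_{rs}c,\cdot)$ lie in $X$ by left translation invariance. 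Since $\mu$ is a mean it is positive and monotone, so applying $\mu_s$ preserves the inequality; left invariance then gives $\mu_sD_g(T_{rs}c,T_ry)=\mu_sD_g(T_sc,T_ry)$ and $\mu_sD_g(T_{rs}c,y)=\mu_sD_g(T_sc,y)$. As $\alpha+(1-\alpha)=\beta+(1-\beta)=1$, both convex combinations collapse and the inequality becomes exactly $\mu_sD_g(T_sc,T_ry)\le\mu_sD_g(T_sc,y)$, as required.

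The computation is short, and the part to get right is the mechanism by which the hybrid structure vanishes: it is the left invariance of $\mu$, used \emph{after} the substitution $x=T_sc$, that makes the leading slot $T_{rs}c$ average to the same value as $T_sc$, rendering the coefficients $\alpha,\beta$ irrelevant. I expect the only genuine care needed is the bookkeeping that every function to which $\mu$ is applied truly belongs to $X$ and that the positivity of the mean licenses passing from a pointwise inequality to one between $\mu$-values.
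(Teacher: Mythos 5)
Your proposal is correct and follows essentially the same route as the paper: you reduce fixed-point membership to showing $z_\mu\in A^g_C(S)$, establish the averaged inequality $\mu_sD_g(T_sc,T_ry)\le\mu_sD_g(T_sc,y)$ by applying the hybrid inequality \eqref{BregHybrid} along the orbit and letting left invariance of $\mu$ collapse the coefficients $\alpha,\beta$ (the paper's \eqref{eq1}), and finish via Lemma \ref{Lemmazmu} and $y=z_\mu$. The only cosmetic difference is that you obtain the equivalence between $D_g(z_\mu,T_ry)\le D_g(z_\mu,y)$ and the averaged inequality by subtracting two instances of the ready-made identity \eqref{eq222}, whereas the paper re-runs the three-point identity \eqref{BregThreeProint} inside the theorem's proof --- the same computation, packaged slightly more efficiently on your side.
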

\begin{proof}
If suffices to show that $z_\mu$ is a {Bregman attractive point} of the action; see \eqref{BregAttrac}.
Indeed, for each $t\in S$, from the three-point identity \eqref{BregThreeProint}, we have
$$
D_g(z_\mu,T_tx)-D_g(z_\mu,x)=D_g(x,T_tx)+\left\langle z_\mu-x,\nabla g(x)-\nabla g(T_tx)\right\rangle.
$$
From the definition of $z_\mu$ we have
	\begin{equation*}
	\begin{array}{rl}
	&\phantom{=}\ D_g(x,T_tx)+\left\langle z_\mu-x,\nabla g(x)-\nabla g(T_tx)\right\rangle\\
	&=D_g(x,T_tx)+\mu_s\left\langle T_sc-x,\nabla g(x)-\nabla g(T_tx)\right\rangle\\
	&=D_g(x,T_tx)+\mu_s\left\langle T_{ts}c-x,\nabla g(x)-\nabla g(T_tx)\right\rangle,\; \mbox{since } \mu \mbox{ is a }\operatorname{LIM},\\
	&=D_g(x,T_tx)+\mu_s\left[D_g(T_{ts}c,T_tx) -D_g(T_{ts}c,x)-D_g(x,T_tx)\right]\\
	&=\mu_sD_g(T_{ts}c,T_tx)-\mu_sD_g(T_{ts}c,x)\\
	&=\mu_sD_g(T_{ts}c,T_tx)-\mu_sD_g(T_{s}c,x),\; \mbox{since }\mu \mbox{ is a }\operatorname{LIM}.
	\end{array}
	\end{equation*}
Because the action is Bregman generalized hybrid, there exists $\alpha, \beta\in\mathbb{R}$ satisfying \eqref{BregHybrid}. Thus, by following an idea in \cite[Lemma 5.1]{LauZhang16},
	\begin{equation}\label{eq1}
	\begin{array}{rl}
	\mu_sD_g(T_{ts}c,T_tx)&=\mu_s\left[\alpha D_g(T_{ts}c,T_tx)+(1-\alpha)D_g(T_{ts}c,T_tx)\right]\\
	&=\mu_s\left[\alpha D_g(T_{t}T_{s}c,T_tx)+(1-\alpha)D_g(T_{s}c,T_tx)\right],\; \mbox{since }\mu \mbox{ is a }\operatorname{LIM},\\
	&\leq \mu_s\left[\beta D_g(T_{ts}c,x)+(1-\beta)D_g(T_sc,x)\right]\\
	&=\mu_s\left[\beta D_g(T_{s}c,x)+(1-\beta)D_g(T_sc,x)\right],\; \mbox{since }\mu \mbox{ is a }\operatorname{LIM},\\
	&=\mu_s D_g(T_sc,x).
	\end{array}
	\end{equation}
Therefore,
$$
D_g(z_\mu,T_tx)\leq D_g(z_\mu,x),\quad\forall t\in S,\ x\in C.
$$
In other words, $z_\mu$ is a Bregman attractive point.
	
	By Lemma \ref{Lemmazmu}, $z_\mu\in C$, and thus
$D_g(z_\mu,T_tz_\mu)\leq D_g(z_\mu,z_\mu)=0$.  Consequently, $T_tz_\mu=z_\mu$ for all $t\in S$.
    In other words, $z_\mu$ is a common fixed point of $S$.	
\end{proof}
In the following theorems, we need to assume, in addition, that
the Bregman subspace $X$ contains all the functions $s\to D_g(x,T_sc), \forall x\in C$.

\begin{Theorem}\label{mainThm2}
	Let $C$ be a closed convex subset of a reflexive Banach space $E$.
Let $(s,x)\mapsto T_s x$ be a Bregman nonspreading action of a semitopological semigroup $S$ with a bounded orbit $O_c$.
If the Bregman subspace $X$ associated to $O_c$ has a left invariant mean $\mu$,
then the $\mu-$barycenter $z_\mu$ of $O_c$ is a common fixed point of $S$.
\end{Theorem}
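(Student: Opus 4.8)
The plan is to follow the architecture of the proof of Theorem \ref{mainThm}: I again show that the $\mu$-barycenter $z_\mu$ is a Bregman attractive point, i.e.\ $D_g(z_\mu,T_tx)\leq D_g(z_\mu,x)$ for all $t\in S$ and $x\in C$, and then conclude via Lemma \ref{Lemmazmu} and the definiteness of $D_g$. The opening computation is \emph{verbatim} that of Theorem \ref{mainThm}: starting from the three-point identity \eqref{BregThreeProint} applied to the triple $(T_{ts}c,x,T_tx)$, invoking the defining relation \eqref{PsiFunctional} of $z_\mu$ and the left invariance of $\mu$, I arrive at
\begin{equation*}
D_g(z_\mu,T_tx)-D_g(z_\mu,x)=\mu_sD_g(T_{ts}c,T_tx)-\mu_sD_g(T_sc,x).
\end{equation*}
Hence everything reduces to the single estimate $\mu_sD_g(T_{ts}c,T_tx)\leq\mu_sD_g(T_sc,x)$.

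To obtain this estimate I apply the Bregman nonspreading inequality \eqref{BregNonspred}, with the fixed index $t$ in place of $s$ and the pair $(T_sc,x)$ (both in $C$) in place of $(x,y)$, which yields, using $T_tT_sc=T_{ts}c$ and the positivity of the mean,
\begin{equation*}
D_g(T_{ts}c,T_tx)+D_g(T_tx,T_{ts}c)\leq D_g(T_{ts}c,x)+D_g(T_tx,T_sc),\quad\forall s\in S.
\end{equation*}
The essential observation is that all four functions of $s$ here lie in $X$: the two functions $s\mapsto D_g(T_{ts}c,T_tx)$ and $s\mapsto D_g(T_{ts}c,x)$ are left translates by $t$ of the coordinate generators in \eqref{BregCoefunction}, while $s\mapsto D_g(T_tx,T_sc)$ and its left translate $s\mapsto D_g(T_tx,T_{ts}c)$ lie in $X$ precisely because of the additional standing assumption that $X$ contains every $s\mapsto D_g(w,T_sc)$ with $w\in C$ (here $w=T_tx\in C$). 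Applying $\mu_s$ and using left invariance twice—once to replace $\mu_sD_g(T_{ts}c,x)$ by $\mu_sD_g(T_sc,x)$, once to replace $\mu_sD_g(T_tx,T_{ts}c)$ by $\mu_sD_g(T_tx,T_sc)$—the two reversed-distance terms cancel, leaving exactly $\mu_sD_g(T_{ts}c,T_tx)\leq\mu_sD_g(T_sc,x)$.

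Combining the two displays gives $D_g(z_\mu,T_tx)\leq D_g(z_\mu,x)$ for all $t\in S$ and $x\in C$, so $z_\mu\in A^g_C(S)$; see \eqref{BregAttrac}. Since $z_\mu\in C$ by Lemma \ref{Lemmazmu}, the choice $x=z_\mu$ forces $D_g(z_\mu,T_tz_\mu)\leq D_g(z_\mu,z_\mu)=0$, whence $T_tz_\mu=z_\mu$ for every $t\in S$, i.e.\ $z_\mu$ is a common fixed point of $S$.

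The step I expect to be the genuine crux—rather than the reduction inherited from Theorem \ref{mainThm}—is the treatment of the reversed term $D_g(T_tx,T_{ts}c)$. Because $D_g$ is not symmetric, the nonspreading inequality carries a second, \textquotedblleft backward\textquotedblright\ distance on its left-hand side that one cannot simply discard: dropping it as a nonnegative term would only produce the useless bound $\mu_sD_g(T_{ts}c,T_tx)\leq\mu_sD_g(T_sc,x)+\mu_sD_g(T_tx,T_sc)$. The fix is to retain it and use left invariance to pair it against the backward term on the right, and this is exactly where the extra hypothesis on $X$ is indispensable, since otherwise $\mu_s$ would not even be defined on these backward functions.
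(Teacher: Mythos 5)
Your proposal is correct and takes essentially the same approach as the paper: the identical reduction inherited from Theorem \ref{mainThm} to the single estimate $\mu_sD_g(T_{ts}c,T_tx)\leq\mu_sD_g(T_sc,x)$ of \eqref{eq:NSineq}, settled by applying $\mu_s$ to the nonspreading inequality \eqref{BregNonspred} and using left invariance on both the forward term $\mu_sD_g(T_{ts}c,x)$ and the backward term $\mu_sD_g(T_tx,T_{ts}c)$, the latter being where the extra standing hypothesis on $X$ enters, exactly as in the paper. The only difference is presentational: the paper phrases the cancellation of the backward terms as a proof by contradiction (deriving $\mu_s D_g(T_{t_0}x_0,T_{t_0s}c)<\mu_s D_g(T_{t_0}x_0,T_sc)$, contradicting left invariance), whereas you cancel them directly, which is equivalent and if anything slightly cleaner.
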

\begin{proof}
The proof goes exactly as in that of Theorem \ref{mainThm}, except for \eqref{eq1}.
In other words, it suffices to verify that
\begin{align}\label{eq:NSineq}
\mu_sD_g(T_{ts}c,T_tx)\leq\mu_s D_g(T_sc,x), \quad\forall t\in S,\ x\in C
\end{align}
Suppose it is not the case, and let $t_0\in S$ and $x_0\in C$ such that $\mu_sD_g(T_{t_0s}c,T_{t_0}x_0)>\mu_s D_g(T_sc,x_0)$. Since $T_{t_0}$ is Bregman nonspreading, by \eqref{BregNonspred} we have
 $$
 D_g(T_{t_0}T_sc,T_{t_0}x_0)+D_g(T_{t_0}x_0,T_{t_0}T_sc)\leq D_g(T_{t_0}T_sc,x_0)+D_g(T_{t_0}x_0,T_sc).
 $$
 By taking $\mu_s$ on both sides, we have
\begin{equation*}
\begin{array}{rl}
\mu_s D_g(T_{t_0s}c,T_{t_0}x_0)+\mu_s D_g(T_{t_0}x_0,T_{t_0s}c)&\leq \mu_s D_g(T_{t_0s}c,x_0)+\mu_s D_g(T_{t_0}x_0,T_sc)\\
&=\mu_s D_g(T_sc,x_0)+\mu_s D_g(T_{t_0}x_0,T_sc)\\
&< \mu_s D_g(T_{t_0s}c, T_{t_0}x_0)+\mu_s D_g(T_{t_0}x_0,T_sc).
\end{array}
\end{equation*}
Thus $\mu_s D_g(T_{t_0}x_0,T_{t_0s}c)<\mu_s D_g(T_{t_0}x_0,T_sc)$. However, this conflicts with the fact that $\mu$ is a $\operatorname{LIM}$.
This contradiction finishes the verification.
\end{proof}	

Recall that a vector subspace $X$ of $\ell^\infty(S)$ is called \emph{right translation invariant} if $r_sf\in X$ for all $f\in X$,
 where $r_sf(t)=f(ts), \forall t\in S$, is the \emph{right translation} of $f$.
A mean $\mu$ on a right translation invariant subspace
 is called \emph{right  invariant} if $\mu(r_sf)=\mu(f)$ for all
 $s\in S, f\in X$.
If $X$ is translation invariant, i.e., both left and right translation invariant,
we call a mean $\mu$ on $X$ an \textit{invariant mean} if it is both left and right invariant.
On the other hand, we call  $\mu$  \textit{faithful} if $f=0$ whenever $f\in X$, $f\geq 0$ and $\mu(f)=0$.

\begin{Theorem}\label{mainthm3}
Let $C$ be a closed convex subset of a reflexive Banach space $E$.
Let $(s,x)\mapsto T_s x$ be a Bregman left asymptotically nonexpansive action of a semitopological semigroup $S$ with a bounded orbit $O_c$.
If the Bregman subspace $X$ associated to $O_c$ is translation invariant and has a faithful invariant mean $\mu$,
then the $\mu-$barycenter $z_\mu$ of $O_c$ is a common fixed point of $S$.
\end{Theorem}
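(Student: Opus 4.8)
The plan is to follow the skeleton of the proof of Theorem \ref{mainThm}. First, Lemma \ref{Lemmazmu} gives $z_\mu\in C$, so that $T_tz_\mu\in C$ is defined for every $t\in S$; and the computation \eqref{eq222} in the proof of Lemma \ref{Lemmazmu} shows that $z_\mu$ is the unique minimizer over $E$ of $\Phi(x):=\mu_sD_g(T_sc,x)$, with the key identity $\Phi(x)-\Phi(z_\mu)=D_g(z_\mu,x)$ for all $x\in E$. Rather than chasing a full attractive-point inequality, I would aim to prove directly that $D_g(z_\mu,T_tz_\mu)=0$ for every $t\in S$, which forces $T_tz_\mu=z_\mu$ since $D_g$ vanishes only on the diagonal. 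I would split this into two stages: an approximate (asymptotic) fixed point estimate obtained from \eqref{BregLeftAsym} together with left invariance, and an exact upgrade using right invariance and faithfulness of $\mu$.

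For the first stage, fix $\varepsilon>0$ and apply the Bregman left asymptotically nonexpansive property \eqref{BregLeftAsym} with $y=z_\mu$ to obtain $s_\varepsilon\in S$, depending only on $\varepsilon$, such that $D_g(T_{s's_\varepsilon}x',T_{s's_\varepsilon}z_\mu)\le D_g(x',z_\mu)+\varepsilon$ for all $s'\in S$ and $x'\in C$. Taking $s'=t$ and $x'=T_sc$ and using $T_{ts_\varepsilon}(T_sc)=T_{(ts_\varepsilon)s}c$, this reads $D_g(T_{(ts_\varepsilon)s}c,T_{ts_\varepsilon}z_\mu)\le D_g(T_sc,z_\mu)+\varepsilon$ for every $s\in S$. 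Applying $\mu_s$ and then left invariance (with the shift $ts_\varepsilon$) to the left-hand side collapses it to $\Phi(T_{ts_\varepsilon}z_\mu)$, while the right-hand side is $\Phi(z_\mu)+\varepsilon$; the minimizer identity then yields
\[
D_g(z_\mu,T_{ts_\varepsilon}z_\mu)=\Phi(T_{ts_\varepsilon}z_\mu)-\Phi(z_\mu)\le\varepsilon,\qquad\forall t\in S .
\]
The important point is that the single element $s_\varepsilon$ works simultaneously for all $t$, because the $s_\varepsilon$ furnished by \eqref{BregLeftAsym} depends only on $\varepsilon$ and on the fixed target $y=z_\mu$.

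For the second stage, set $\delta(t):=D_g(z_\mu,T_tz_\mu)\ge 0$. The estimate above says exactly that the right translate $r_{s_\varepsilon}\delta$ satisfies $0\le (r_{s_\varepsilon}\delta)(t)=\delta(ts_\varepsilon)\le\varepsilon$ for all $t$, so $\mu(r_{s_\varepsilon}\delta)\le\varepsilon$. Since $\mu$ is right invariant, $\mu(\delta)=\mu(r_{s_\varepsilon}\delta)\le\varepsilon$; letting $\varepsilon\downarrow0$ gives $\mu(\delta)=0$. As $\delta\ge0$ and $\mu$ is faithful, we conclude $\delta\equiv0$, i.e.\ $D_g(z_\mu,T_tz_\mu)=0$ and hence $T_tz_\mu=z_\mu$ for every $t\in S$.

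The main obstacle is the first stage. Unlike the hybrid and nonspreading inequalities \eqref{BregHybrid}, \eqref{BregNonspred}, which compare the quantities $D_g(T_sx,T_sy)$, $D_g(x,T_sy)$, $D_g(T_sx,y)$ under the same index $s$ and are tailor-made for a $\operatorname{LIM}$, the asymptotic condition only controls $D_g(T_{\cdot s_\varepsilon}x',T_{\cdot s_\varepsilon}z_\mu)$, in which \emph{both} points are moved by the \emph{same} map; whereas the quantity governing attractiveness, $\mu_sD_g(T_{ts}c,T_tx)$, pits $T_{ts}$ against $T_t$. Reconciling the two is precisely what the substitution $x'=T_sc$ followed by one use of left invariance accomplishes, after which everything reduces to the scalar minimizer identity for $\Phi$. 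A secondary, purely technical point to check is that the functions to which $\mu$ is applied in the second stage---namely $t\mapsto D_g(z_\mu,T_tz_\mu)$ and its right translate---lie in (the domain of $\mu$ on) $X$; this is where the standing hypotheses that $X$ be translation invariant and contain the functions $s\mapsto D_g(x,T_sc)$ for $x\in C$ (applied with $x=z_\mu,\,T_tz_\mu\in C$) are used, and it explains why faithfulness together with two-sided invariance, rather than a bare $\operatorname{LIM}$, is assumed here.
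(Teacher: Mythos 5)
Your proposal is correct and follows essentially the same route as the paper's own proof: you apply \eqref{BregLeftAsym} at $y=z_\mu$ with $x'=T_sc$, collapse the left-hand side by left invariance, and use the minimizer identity from \eqref{eq222} to get $D_g(z_\mu,T_{ts_\varepsilon}z_\mu)\le\varepsilon$ uniformly in $t$ (this is exactly the paper's \eqref{eq55}, with the roles of the index letters swapped), then finish with right invariance and faithfulness just as the paper does. The differences are cosmetic---your $\Phi$ notation, and your explicit remark about membership of $t\mapsto D_g(z_\mu,T_tz_\mu)$ in $X$, a point the paper's proof uses silently.
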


\begin{proof}
 By Lemma \ref*{Lemmazmu}, $z_\mu\in C$. Since the action is Bregman left asymptotically nonexpansive,
 by \eqref{BregLeftAsym} for each given $\varepsilon>0$  there exists $s_\varepsilon\in S$ such that
$$
D_g(T_{ss_\varepsilon}T_tc,T_{ss_\varepsilon}z_\mu)\leq D_g(T_tc,z_\mu)+\varepsilon,\quad 	\forall s, t\in S.
$$
Then
	\begin{equation*}
	\begin{array}{rl}
	\mu_t D_g(T_tc,T_{ss_\varepsilon}z_\mu)&=\mu_t D_g(T_{ss_\varepsilon}T_tc,T_{ss_\varepsilon}z_\mu)\\
	&\leq \mu_t D_g(T_tc,z_\mu)+\varepsilon,\quad	\forall s\in S.
	\end{array}
	\end{equation*}
On the other hand, from \eqref{eq222} we have $D_g(z_\mu,x)=\mu_t D_g(T_tc,x)-\mu_t D_g(T_tc,z_\mu)$ for all $x\in E$. Hence, putting $x=T_{ss_\varepsilon}z_\mu$ we get
	\begin{equation}\label{eq55}
	\begin{array}{rl}
	D_g(z_\mu, T_{ss_\varepsilon}z_\mu)&=\mu_t D_g(T_tc,T_{ss_\varepsilon}z_\mu)-\mu_t D_g(T_tc,z_\mu)\\
	&\leq \mu_t D_g(T_tc,z_\mu)+\varepsilon-\mu_t D_g(T_tc,z_\mu)=\varepsilon.
	\end{array}
	\end{equation}
	Since $\mu$ is right invariant, $\mu_s D_g(z_\mu, T_{s}z_\mu)=\mu_s D_g(z_\mu, T_{ss_\varepsilon}z_\mu)\leq\varepsilon$ for all
$\varepsilon>0$.
	Therefore $\mu_s D_g(z_\mu, T_{s}z_\mu)=0$. Since $\mu$ is faithful and $D_g(z_\mu, T_{s}z_\mu)\geq 0$ for all $s\in S$, we have
$D_g(z_\mu, T_{s}z_\mu)=0$. This implies $z_\mu=T_sz_\mu$ for all $s\in S$. Then $z_\mu$ is a common fixed point for $S$.
\end{proof}

We are now going to discuss the uniqueness of the fixed point. In fact, as shown in Propositions \ref{independence}
and \ref{ProposProject} below,
the common fixed point $z_\mu$, which is the $\mu-$barycenter of a bounded orbit $O_c$,  is the same for any choice of an
\emph{invariant} mean $\mu$.

\begin{Lemma}\label{LemmaInfSup}
Let $C$ be a closed convex subset of a reflexive Banach space $E$.
Let $(s,x)\mapsto T_s x$ be an action of a semitopological semigroup $S$ with a bounded orbit $O_c$.
Let the Bregman subspace $X$ associated to $O_c$ be translation invariant and have an  invariant mean $\mu$,
 and let $y$ be a common fixed point. Then
	 \begin{itemize}
	 	\item [(i)] $\displaystyle\sup_t\inf_sD_g(T_{ts}c,y)\leq\mu_tD_g(T_tc,y)\leq \displaystyle\inf_s\sup_tD_g(T_{ts}c,y)$ if the action is Bregman generalized hybrid or Bregman nonspreading;
	 	\item [(ii)] $\displaystyle\sup_t\inf_sD_g(T_{ts}c,y)=\mu_tD_g(T_tc,y)=\displaystyle\inf_{s}\sup_{t}D_g(T_{ts}c,y)$ if the action
is  Bregman left asymptotically nonexpansive.
	 \end{itemize}
\end{Lemma}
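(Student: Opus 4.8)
The plan is to collapse the two-variable quantity into a single bounded function on $S$ and then exploit the two-sided invariance of $\mu$. Write $\psi(u)=D_g(T_uc,y)$ for $u\in S$. Since $y\in C\subseteq E$, this $\psi$ lies in the Bregman subspace $X$; it is bounded (by local boundedness of $g$ together with boundedness of $O_c$); and the homomorphism property gives $D_g(T_{ts}c,y)=D_g(T_t(T_sc),y)=\psi(ts)$. Consequently $s\mapsto\psi(ts)=l_t\psi(s)$ and $t\mapsto\psi(ts)=r_s\psi(t)$ both belong to $X$ by translation invariance, so $\mu$ may be applied in either variable. Put $m:=\mu(\psi)=\mu_tD_g(T_tc,y)$, and recall that every mean satisfies $\inf_u f(u)\le\mu(f)\le\sup_u f(u)$ for real $f\in X$.

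For part (i) I would use only invariance. Right invariance gives $\mu_t\psi(ts)=\mu(r_s\psi)=m$ for each fixed $s$, whence $m\le\sup_t\psi(ts)$; taking the infimum over $s$ yields $m\le\inf_s\sup_t\psi(ts)$. Symmetrically, left invariance gives $\mu_s\psi(ts)=\mu(l_t\psi)=m$ for each fixed $t$, whence $\inf_s\psi(ts)\le m$; taking the supremum over $t$ yields $\sup_t\inf_s\psi(ts)\le m$. These two inequalities are exactly (i). I note that this argument uses neither the generalized hybrid nor the nonspreading hypothesis: any invariant mean together with any common fixed point suffices, the role of those hypotheses being only to single out situations where a common fixed point is known to exist.

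For part (ii) the extra input is the asymptotic inequality. Because $y$ is fixed by every $T_s$, specializing \eqref{BregLeftAsym} with $x=T_rc$ turns it into a statement purely about $\psi$: for each $\varepsilon>0$ there is $s_\varepsilon\in S$ with
\[
\psi(t\,s_\varepsilon\,s)=D_g\big(T_{ts_\varepsilon}(T_sc),y\big)\le D_g(T_sc,y)+\varepsilon=\psi(s)+\varepsilon,\qquad\forall\,t,s\in S.
\]
Fixing $s$ and taking $\sup_t$ gives $\sup_t\psi(t\,s_\varepsilon s)\le\psi(s)+\varepsilon$; since $s_\varepsilon s$ is one admissible value of the outer variable, $\inf_{s'}\sup_t\psi(ts')\le\psi(s)+\varepsilon$ for every $s$, hence $\inf_s\sup_t\psi(ts)\le\inf_s\psi(s)+\varepsilon$. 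On the other hand the trivial inclusion $\{ts:s\in S\}\subseteq S$ gives $\inf_s\psi(ts)\ge\inf_u\psi(u)$ for each $t$, so $\inf_u\psi(u)\le\sup_t\inf_s\psi(ts)$. Chaining these,
\[
\inf_s\sup_t\psi(ts)\le\inf_u\psi(u)+\varepsilon\le\sup_t\inf_s\psi(ts)+\varepsilon,
\]
and letting $\varepsilon\downarrow0$ forces $\inf_s\sup_t\psi(ts)\le\sup_t\inf_s\psi(ts)$. Combined with the elementary minimax inequality $\sup_t\inf_s\psi(ts)\le\inf_s\sup_t\psi(ts)$, the two iterated extrema coincide, and the sandwich from (i) pins their common value to $m$; this is (ii).

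The step I expect to be delicate is precisely this last one. The asymptotic hypothesis only produces upper bounds of the form $\psi(t\,s_\varepsilon s)\le\psi(s)+\varepsilon$, which naturally control the $\inf_s\sup_t$ order but say nothing directly about the reverse $\sup_t\inf_s$ order. The device that makes it work is to relay the estimate through the global infimum $\inf_u\psi(u)$ and invoke the general minimax inequality, rather than trying to bound $\sup_t\inf_s\psi(ts)$ from below head-on. One must also keep track that all the suprema and infima in play are finite, which is exactly where boundedness of $O_c$ and local boundedness of $g$ enter.
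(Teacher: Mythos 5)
Your proof is correct, and it takes a genuinely different route from the paper's on both halves. For the upper bound $\mu_tD_g(T_tc,y)\le\inf_s\sup_tD_g(T_{ts}c,y)$ you and the paper argue identically (right invariance plus the mean bound $\mu(f)\le\sup f$), but for the lower bound in (i) the paper invokes the generalized hybrid/nonspreading hypothesis through the inequality $\mu_s D_g(T_{ts}c,T_tx)\le\mu_s D_g(T_sc,x)$ of \eqref{eq1} and \eqref{eq:NSineq}, specialized at the fixed point $y$, whereas you get $\sup_t\inf_s\psi(ts)\le\mu(\psi)$ from left invariance alone via $\mu(\psi)=\mu(l_t\psi)\ge\inf_s\psi(ts)$. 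Your observation that (i) therefore holds for an \emph{arbitrary} action admitting a two-sided invariant mean and a common fixed point is accurate and actually strengthens the lemma; what the paper's heavier route buys instead is that \eqref{eq1}/\eqref{eq:NSineq} is valid for every $x\in C$, not only for fixed points, and that stronger estimate is what gets reused in Theorems \ref{mainThm} and \ref{mainThm2} and in Proposition \ref{independence}. For (ii), the paper proves both missing inequalities directly with the mean: $\inf_s\sup_t\le\mu$ essentially as you do, and $\mu\le\sup_t\inf_s$ by applying \eqref{BregLeftAsym} at $x=T_{us}c$, taking $\mu_t$, and using right invariance in the form $\mu_tD_g(T_{tuus}c,y)=\mu_tD_g(T_tc,y)$. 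You instead close the gap mean-free, relaying $\inf_s\sup_t\psi(ts)\le\inf_u\psi(u)+\varepsilon\le\sup_t\inf_s\psi(ts)+\varepsilon$ through the global infimum and the elementary minimax inequality, and then pinning the common value with the hypothesis-free sandwich from (i) --- which is legitimate precisely because your proof of (i) never used the hybrid/nonspreading assumption, so the sandwich applies in the asymptotically nonexpansive setting as well. Your use of the uniformity of $s_\varepsilon$ over $x\in C$ in \eqref{BregLeftAsym} (needed to get $\psi(ts_\varepsilon s)\le\psi(s)+\varepsilon$ for all $t,s$ simultaneously) is correct, as are the housekeeping points that $\psi\in X$ (since $X$ contains $s\mapsto D_g(T_sc,x)$ for all $x\in E$, and $y\in C$) and that everything in sight is finite by local boundedness of $g$ on the bounded orbit. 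In short: same skeleton for the invariance-based bounds, but a more elementary and more general lower-bound argument in (i), and a cleaner minimax-collapse in place of the paper's second mean computation in (ii).
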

\begin{proof}
	We follow an idea in \cite{LauTaka95} which works for the Hilbert space.
	For each $f\in X$ and $s\in S$, by the right invariance of $\mu$, we have
$$
\mu_tD_g(T_tc,y)=\mu_tD_g(T_{ts}c,y)\leq\displaystyle\sup_{t\in S} D_g(T_{ts}c,y).
$$
Hence,
$$
\mu_tD_g(T_tc,y)\leq \displaystyle\inf_s\sup_tD_g(T_{ts}c,y).
$$
	
	(i) As shown in \eqref{eq1} and \eqref{eq:NSineq}, for each $t, u\in S$ we have
$$
\mu_s D_g(T_sc,y)\geq \mu_s D_g(T_{tu}T_sc,T_{tu}y)\geq\inf_s D_g(T_{ts}c,y).
$$
Thus $\mu_s D_g(T_sc,y)\geq \displaystyle\sup_t\inf_sD_g(T_{ts}c,y)$.	
	
	{(ii)} Since $y$ is a common fixed point and the action is  Bregman left asymptotically nonexpansive,
for any $\epsilon>0$ there is $u\in S$ such that
$$
D_g(T_{tus}c,y) = D_g(T_{tu}T_sc,T_{tu}y)\leq D_g(T_{s}c,y)+\epsilon, \quad \forall s,t\in S.
$$
Consequently,
	\begin{equation*}
	\begin{array}{rl}
\displaystyle\inf_{s'}\sup_t D_g(T_{ts'}c,y)&\leq\displaystyle\sup_t D_g(T_{tus}c,y)
%=\displaystyle\sup_t D_g(T_{tu}T_sc,T_{tu}y)
\leq\displaystyle\sup_{t}D_g(T_sc,y) + \epsilon
\\
&= D_g(T_sc,y) + \epsilon, \quad\forall s\in S.
	\end{array}
	\end{equation*}
Because $\epsilon >0$ is arbitrary, $\displaystyle\inf_{s}\sup_t D_g(T_{ts}c,y)\leq \mu_s D_g(T_sc,y)$. Hence
$\mu_tD_g(T_tc,y)=\displaystyle\inf_{s}\sup_{t}D_g(T_{ts}c,y)$.	

On the other hand, by the  Bregman left asymptotically nonexpansiveness again,
\begin{align*}
\displaystyle\sup_{t'}\inf_sD_g(T_{t's}c,y)
&\geq \inf_s D_g(T_{us}c,y)\geq \inf_s \mu_t D_g(T_{tu}T_{us} c, T_{tu}y) - \epsilon\\
&\geq \inf_s \mu_{t} D_g(T_{tuus} c, y)-\epsilon\\
%&= \inf_s \mu_{u} D_g(T_{uts} c,y)-\epsilon\\
&= \inf_s \mu_{t} D_g(T_{t} c,y)-\epsilon,\quad\text{since $\mu$ is right invariant,}\\
&= \mu_t D_g(T_t c, y)-\epsilon.
\end{align*}
Since $\epsilon >0$ is arbitrary, we have
$$
\displaystyle\sup_{t}\inf_sD_g(T_{ts}c,y)\leq
\displaystyle\inf_{s}\sup_{t}D_g(T_{ts}c,y)=\mu_tD_g(T_tc,y)\leq\displaystyle\sup_{t}\inf_sD_g(T_{ts}c,y).
$$
\end{proof}

\begin{Proposition}\label{independence}
Let $(s,x)\mapsto T_s x$ be an
action of a semitopological semigroup $S$ on a nonempty
 closed convex set $C$ in a reflexive Banach space $E$.
Let  the Bregman subspace $X$ associated to a bounded orbit $O_c$  is translation invariant.
Assume that the barycenters $z_\mu, z_\psi$ are   common fixed points of $S$ for   invariant means $\mu, \psi$ on $X$, respectively.
Assume that the
action is either
\begin{itemize}
	\item [(i)] Bregman left asymptotically nonexpansive, or
	\item [(ii)] Bregman generalized hybrid or Bregman nonspreading, such that
$$
\displaystyle\sup_t\inf_sD_g(T_{ts}c,z)= \displaystyle\inf_s\sup_tD_g(T_{ts}c,z) \quad\text{for $z=z_\mu$ or $z=z_\psi$}.
 %\displaystyle\sup_t\inf_sD_g(T_{ts}c,z_\psi)= \displaystyle\inf_s\sup_tD_g(T_{ts}c,z_\psi).
$$
\end{itemize}
 Then, $\mu_tD_g(T_tc,z_\mu)=\psi_tD_g(T_tc,z_\psi)$ and $z_\mu=z_\psi$.
\end{Proposition}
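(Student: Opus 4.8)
The plan is to reduce the whole statement to one observation: under either hypothesis, the barycentric value $\mu_t D_g(T_t c, z)$ depends only on the target point $z$ and not on the invariant mean that produced the barycenter, provided $z$ is one of the common fixed points $z_\mu, z_\psi$. First I would record this mean-independence. In case (i), Lemma~\ref{LemmaInfSup}(ii) applied to the common fixed point $z$ (once with the mean $\mu$ and once with the mean $\psi$) gives
\[
\mu_t D_g(T_t c, z) = \inf_s\sup_t D_g(T_{ts}c, z) = \psi_t D_g(T_t c, z),
\]
since the right-hand side carries no mean at all. In case (ii), the sandwich of Lemma~\ref{LemmaInfSup}(i) together with the assumed collapse $\sup_t\inf_s D_g(T_{ts}c,z) = \inf_s\sup_t D_g(T_{ts}c,z)$ pins both $\mu_t D_g(T_t c, z)$ and $\psi_t D_g(T_t c, z)$ to that same intrinsic value. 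Either way I obtain
\[
\mu_t D_g(T_t c, z) = \psi_t D_g(T_t c, z), \qquad z = z_\mu \ \text{and}\ z = z_\psi.
\]

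Next I would feed this into the barycenter identity \eqref{eq222}, which reads $D_g(z_\mu, x) = \mu_t D_g(T_t c, x) - \mu_t D_g(T_t c, z_\mu)$ for every $x\in E$, together with its $\psi$-analogue. Evaluating the first at $x = z_\psi$ and the second at $x = z_\mu$ produces
\[
D_g(z_\mu, z_\psi) = \mu_t D_g(T_t c, z_\psi) - \mu_t D_g(T_t c, z_\mu),
\]
\[
D_g(z_\psi, z_\mu) = \psi_t D_g(T_t c, z_\mu) - \psi_t D_g(T_t c, z_\psi).
\]
Adding these two and substituting the mean-independence relations $\mu_t D_g(T_t c, z_\psi) = \psi_t D_g(T_t c, z_\psi)$ and $\psi_t D_g(T_t c, z_\mu) = \mu_t D_g(T_t c, z_\mu)$ makes the four terms cancel in pairs, leaving $D_g(z_\mu, z_\psi) + D_g(z_\psi, z_\mu) = 0$.

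To finish, I would invoke nonnegativity of the Bregman distance and its vanishing exactly on the diagonal: since both summands are $\geq 0$ and sum to zero, each is zero, whence $z_\mu = z_\psi$. The remaining equality $\mu_t D_g(T_t c, z_\mu) = \psi_t D_g(T_t c, z_\psi)$ is then immediate, as $z_\mu = z_\psi$ turns it into one of the mean-independence relations already in hand.

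The real content sits in Lemma~\ref{LemmaInfSup}; this proposition is a short application of it. The one load-bearing idea is that the $\inf_s\sup_t$ expression is intrinsic to the action and the target point and therefore blind to which mean was used, so I do not expect any serious obstacle here. The only step requiring a little care is the symmetrizing addition of the two copies of the three-point identity \eqref{eq222}: the individual Bregman distances are in general asymmetric, and it is precisely the symmetric combination $D_g(z_\mu, z_\psi) + D_g(z_\psi, z_\mu)$ whose vanishing forces the two barycenters to coincide.
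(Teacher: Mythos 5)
Your proposal is correct, and it rests on the same two pillars as the paper's own proof --- Lemma \ref{LemmaInfSup} and the barycenter identity \eqref{eq222} --- but assembles them along a genuinely different route. The paper runs a single cyclic chain of inequalities,
\begin{align*}
\mu_tD_g(T_tc,z_\mu)&\leq\mu_tD_g(T_tc,z_\psi)\leq \inf_s\sup_t D_g(T_{ts}c,z_\psi)=\psi_tD_g(T_tc,z_\psi)\\
&\leq \psi_t D_g(T_tc,z_\mu)\leq\inf_s\sup_t D_g(T_{ts}c,z_\mu)=\mu_tD_g(T_tc,z_\mu),
\end{align*}
where the first and fourth inequalities come from the minimizing property \eqref{eq2} of the barycenters, the second and fifth from right invariance of the means, and the two equalities from Lemma \ref{LemmaInfSup} applied only at the \emph{diagonal} pairs $(\mu,z_\mu)$ and $(\psi,z_\psi)$; equality throughout then gives $\mu_tD_g(T_tc,z_\mu)=\mu_tD_g(T_tc,z_\psi)$, and $z_\mu=z_\psi$ follows because $M_\mu=\{z_\mu\}$ is a singleton. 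You instead apply the lemma at the \emph{cross} pairs $(\mu,z_\psi)$ and $(\psi,z_\mu)$ as well, distill the mean-independence of $z\mapsto\mu_tD_g(T_tc,z)$ at both fixed points, and then add two instances of \eqref{eq222} to obtain $D_g(z_\mu,z_\psi)+D_g(z_\psi,z_\mu)=0$, concluding by nonnegativity and diagonal-vanishing of $D_g$ alone. Your version is more symmetric and avoids any appeal to the unique-minimizer set $M_\mu$ (though your two displayed identities are just \eqref{eq222} evaluated at $x=z_\psi$ and $x=z_\mu$, so the underlying mechanism is the same); it also isolates the conceptual core, namely that $\inf_s\sup_tD_g(T_{ts}c,z)$ is intrinsic to the action and blind to the mean. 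The paper's chain, by contrast, hands over the common value $\mu_tD_g(T_tc,z_\mu)=\psi_tD_g(T_tc,z_\psi)$ en route rather than after the fact, and performs the right-invariance step explicitly where you absorb it into the lemma. One point worth stating explicitly: in case (ii) your argument, exactly like the paper's, requires the collapse $\sup_t\inf_sD_g(T_{ts}c,z)=\inf_s\sup_tD_g(T_{ts}c,z)$ at \emph{both} $z=z_\mu$ and $z=z_\psi$ (you use mean-independence at each point separately), so the ``or'' in the statement must be read as ``for each of''; with the collapse at only one barycenter, neither your symmetrized sum nor the paper's chain closes.
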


\begin{proof}
From Lemma \ref{LemmaInfSup} and the assumptions, in each case, we have $$\mu_tD_g(T_tc,z_\mu)=\displaystyle\inf_{s}\sup_{t}D_g(T_{ts}c,z_\mu)\quad \mbox{and}\quad \psi_tD_g(T_tc,z_\psi)=\displaystyle\inf_{s}\sup_{t}D_g(T_{ts}c,z_\psi).$$
Following an idea in \cite[Theorem 4.8]{LauTaka95}, we obtain from \eqref{eq2} that
\begin{equation*}
\begin{array}{rl}
\mu_tD_g(T_tc,z_\mu)&\leq\mu_tD_g(T_tc,z_\psi)= \mu_tD_g(T_{ts}c,z_\psi)\leq \displaystyle\inf_{s}\sup_{t}D_g(T_{ts}c,z_\psi)\\
&= \psi_tD_g(T_tc,z_\psi) \leq \psi_t D_g(T_tc,z_\mu)\leq\displaystyle\inf_{s}\sup_{t}D_g(T_{ts}c,z_\mu)\\
&=\mu_tD_g(T_tc,z_\mu).
\end{array}
\end{equation*}
Hence, $\mu_tD_g(T_tc,z_\mu)=\psi_tD_g(T_tc,z_\psi)$ and $\mu_tD_g(T_tc,z_\mu)=\mu_t D_g(T_tc,z_\psi)$.
Since $M_\mu$ contains the unique point $z_\mu$, we have $z_\mu=z_\psi$.
\end{proof}

The following proposition extends a result of Lau and Zhang \cite[Theorem 4.11]{LauZhang16} in which they considered norm nonexpansive
actions on Hilbert spaces.  It implies that the barycenter $z_\mu$ of a bounded orbit $O_c$ does not depends on the choice of
the invariant mean $\mu$, provided that $S$ is right reversible together with other assumptions.
Recall that a semitopological semigroup $S$ is called \textit{right reversible} if
the intersection $\overline{Sa}\cap\overline{Sb}$ of two closed left ideals is nonempty for every $a, b\in S$.
In this case, we can define a direct order on $S$ by letting $a\leq b$
if $b=a$ or $b\in\overline{Sa}$. It is easy to see that if $a\leq b$ then $a\leq tb$ for all $t\in S$.
In particular, $ts\to \infty$ whenever $s\to\infty$ for any fixed $t$ in $S$.

Recall that if the Bregman function $g$ is Fr\'{e}chet differentiable then $\triangledown g$ is norm-to-norm continuous.
\begin{Proposition}\label{ProposProject}
	Let $C$ be a nonempty closed convex subset of a reflexive Banach space $E$. Assume that
%a strongly coercive and locally bounded Bregman function
%$g$ is Fr\'{e}chet differentiable, and
$\nabla g$ is norm-to-norm continuous and
the Bregman distance $D_g$ is symmetric.
Let $(s,x)\mapsto T_s x$ be a Bregman generalized hybrid or Bregman nonspreading action
of a right reversible semitopological semigroup $S$ on $C$ with  a bounded orbit $O_c$.
 Assume that the Bregman subspace $X$ associated to $O_c$ is translation invariant and has
 an invariant mean $\mu$. Then $A^g_C(S)$ is nonempty, closed and convex. Moreover, $\displaystyle\lim_tP^g_{A^g_C(S)}(T_tc)=z_\mu$ in norm.
\end{Proposition}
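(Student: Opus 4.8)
The plan is to split the statement into its two assertions and to mimic the Hilbert-space argument of Lau and Zhang \cite[Theorem 4.11]{LauZhang16}, replacing the metric projection by the Bregman projection $P^g_{A^g_C(S)}$ of Lemma \ref{LemmaBregProj} and the squared norm by $D_g$. For the structural part, nonemptiness is immediate: the proof of Theorem \ref{mainThm} (resp.\ Theorem \ref{mainThm2}) shows that the $\mu$-barycenter satisfies $D_g(z_\mu,T_sy)\le D_g(z_\mu,y)$, i.e.\ $z_\mu\in A^g_C(S)$. For the other two properties I would note that, for fixed $y\in C$ and $s\in S$, expanding \eqref{defBregDis} cancels the two copies of $g(x)$, so
\[
x\longmapsto D_g(x,T_sy)-D_g(x,y)=\langle x,\nabla g(y)-\nabla g(T_sy)\rangle+\text{const}
\]
is affine and norm-continuous. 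Hence each set $\{x:D_g(x,T_sy)\le D_g(x,y)\}$ is a closed half-space, and $A^g_C(S)$, being their intersection over all $y\in C$ and $s\in S$, is closed and convex.

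For the convergence, write $A:=A^g_C(S)$, $w_t:=P^g_{A}(T_tc)$ and $\phi(t):=D_g(w_t,T_tc)=\min_{y\in A}D_g(y,T_tc)$; strong coercivity of $g$ keeps $\{w_t\}$ bounded. Since $T_{rt}c=T_rT_tc$ and $w_t\in A$ is attractive, $D_g(w_t,T_{rt}c)\le D_g(w_t,T_tc)$, and optimality of $w_{rt}$ then gives $\phi(rt)\le\phi(t)$; thus $\phi$ is non-increasing along the directed order furnished by right reversibility and $\phi(t)\downarrow p:=\inf_t\phi(t)$. Applying the generalized Pythagorean inequality of Lemma \ref{LemmaBregProj} to the projection $w_{t_2}$ of $T_{t_2}c$, with competitor $y=w_{t_1}\in A$, followed by attractiveness of $w_{t_1}$, gives for $t_2\ge t_1$
\[
D_g(w_{t_1},w_{t_2})\le D_g(w_{t_1},T_{t_2}c)-\phi(t_2)\le\phi(t_1)-\phi(t_2),
\]
whose right-hand side tends to $0$ as $t_1,t_2\to\infty$ with $t_2\ge t_1$.

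I expect the \emph{main obstacle} to be upgrading this Bregman asymptotic regularity to honest norm convergence: because $D_g$ obeys no triangle inequality, $D_g(w_{t_1},w_{t_2})\to0$ does not by itself make the net norm-Cauchy. This is precisely where the two extra hypotheses are meant to be spent, namely the norm-to-norm continuity of $\nabla g$ and the symmetry of $D_g$, through the sequential-consistency property of a strongly coercive, Fr\'echet differentiable Bregman function (on bounded sets, $D_g$-smallness forces norm-smallness); combined with directedness of the order this yields a norm limit $w:=\lim_t w_t\in A$.

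Granting norm convergence, I would identify $w=z_\mu$ as follows. First, substituting $w_t\to w$ into the three-point identity \eqref{BregThreeProint} with $(x,y,z)=(T_tc,w_t,w)$ and using norm-to-norm continuity of $\nabla g$ shows $D_g(T_tc,w)-\phi(t)\to0$; since the net $D_g(T_tc,w)=D_g(w,T_tc)$ is also non-increasing by attractiveness of $w$, we obtain $\inf_tD_g(T_tc,w)=p$. Second, exactly as in the first half of the proof of Lemma \ref{LemmaInfSup}, right invariance of $\mu$ together with symmetry and attractiveness gives $\sup_tD_g(T_{ts}c,v)\le D_g(T_sc,v)$, hence $\mu_tD_g(T_tc,v)=\inf_tD_g(T_tc,v)$ for every attractive point $v$. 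Using this for $v=w$ and for $v=z_\mu$ inside the identity $\mu_tD_g(T_tc,x)-\mu_tD_g(T_tc,z_\mu)=D_g(z_\mu,x)$ from \eqref{eq222} yields
\[
D_g(z_\mu,w)=\inf_tD_g(T_tc,w)-\inf_tD_g(T_tc,z_\mu)=p-d,
\]
where $d:=\inf_tD_g(T_tc,z_\mu)\ge\inf_t\phi(t)=p$ because $z_\mu\in A$. Thus $0\le D_g(z_\mu,w)=p-d\le0$, forcing $D_g(z_\mu,w)=0$, so $w=z_\mu$ by \eqref{eq2}; that is, $\lim_tP^g_{A^g_C(S)}(T_tc)=z_\mu$ in norm.
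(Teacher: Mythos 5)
Your outline is sound, and in two places it takes a genuinely different route from the paper. Where the paper disposes of closedness, convexity, \emph{and} the strong convergence of the net $P^g_{A^g_C(S)}(T_tc)$ in one stroke by citing \cite[Lemmas 4.9 and 4.10]{LauZhang16} and \cite[Lemmas 3.2 and 4.4]{ENC2019}, you prove closed convexity directly --- your observation that $x\mapsto D_g(x,T_sy)-D_g(x,y)$ is affine, so that $A^g_C(S)$ is an intersection of closed half-spaces, is more elementary than anything in the paper --- and you reconstruct the standard monotonicity/Pythagoras argument for the projections yourself. Your identification of the limit with $z_\mu$ is also a different mechanism: the paper derives from Lemma \ref{LemmaBregProj} and symmetry the variational inequality $\left\langle T_{ts}c-P^g_{A^g_C(S)}(T_{ts}c),\nabla g(P^g_{A^g_C(S)}(T_{ts}c))-\nabla g(y)\right\rangle\geq 0$, applies $\mu_t$, and uses right invariance together with the \emph{uniform in $t$} convergence $\nabla g(P^g_{A^g_C(S)}(T_{ts}c))\to\nabla g(u)$ (this is where right reversibility enters, via $ts\to\infty$ as $s\to\infty$, and where norm-to-norm continuity of $\nabla g$ is spent) to reach $\left\langle z_\mu-u,\nabla g(y)-\nabla g(u)\right\rangle\leq 0$ for all $y\in A^g_C(S)$; putting $y=z_\mu$ the pairing equals $D_g(z_\mu,u)+D_g(u,z_\mu)$, whence $u=z_\mu$. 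Your scalar route through $\phi(t)$, infima, the mean inequality $\inf f\leq\mu(f)\leq\sup f$, and identity \eqref{eq222} avoids that delicate limit interchange entirely, at the cost of re-deriving a fragment of Lemma \ref{LemmaInfSup} for attractive points --- which symmetry makes legitimate, and which is admissible since $X$ contains $s\mapsto D_g(T_sc,x)$ for all $x\in E$. The final computation $D_g(z_\mu,w)=p-d\leq 0$, hence $w=z_\mu$ by \eqref{eq2}, is correct.

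The one genuine soft spot is exactly the step you flagged: upgrading $D_g(w_{t_1},w_{t_2})\to 0$ to the net being norm-Cauchy. You assert that norm-to-norm continuity of $\nabla g$ plus symmetry of $D_g$ supply the needed ``sequential consistency.'' They do not: sequential consistency (bounded $\{x_n\}$ and $D_g(x_n,y_n)\to 0$ imply $\|x_n-y_n\|\to 0$) is equivalent to total convexity of $g$ on bounded sets, a uniform-convexity-type condition that is not implied by smoothness of $\nabla g$, by symmetry of $D_g$, by strong coercivity, nor by the two conditions in this paper's definition of a Bregman function. (Even with symmetry, which essentially forces $g$ to be quadratic plus affine so that $D_g(x,y)$ is a positive bilinear expression in $x-y$, positivity without coercivity of the form gives no norm control.) To be fair, the paper does not prove this step either --- it imports the strong convergence wholesale from the cited lemmas, whose hypotheses carry the requisite total-convexity-type property --- so your argument becomes a complete (and in parts cleaner) proof if you replace your heuristic at this one point by the same citation, or add total convexity on bounded sets as an explicit hypothesis.
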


\begin{proof}
It is shown in the proof of Theorems \ref{mainThm} and \ref{mainThm2}
 that $z_\mu\in A^g_C(S)$.	Hence $A^g_C(S)$ is nonempty. Furthermore,
 as shown in \cite[Lemmas 4.9 and 4.10]{LauZhang16}, see also \cite[Lemmas 3.2 and 4.4]{ENC2019},
 $A^g_C(S)$ is closed convex,
  and $P^g_{A^g_C(S)}(T_tc)$ converges strongly to some $u\in A^g_C(S)$.
Since $D_g$ is symmetric, i.e., $D_g(x,y)=D_g(y,x), \forall x,y\in E$, by Lemma \ref{LemmaBregProj}, for each $t, s\in S$ we have
$$D_g(T_{ts}c,P^g_{A^g_C(S)}(T_{ts}c))+D_g(P^g_{A^g_C(S)}(T_{ts}c),y)\leq D_g(T_{ts}c,y).$$
By the three-point identity \eqref{BregThreeProint}, we have
$$
\left\langle T_{ts}c-P^g_{A^g_C(S)}(T_{ts}c),\nabla g(P^g_{A^g_C(S)}(T_{ts}c))-\nabla g(y) \right\rangle\geq 0,
\quad \forall y\in A^g_C(S).
$$
This implies
\begin{multline}\label{eq3}
\left\langle T_{ts}c-P^g_{A^g_C(S)}(T_{ts}c),\nabla g(y)-\nabla g(u) \right\rangle\\
\leq \left\langle T_{ts}c-P^g_{A^g_C(S)}(T_{ts}c),\nabla g(P^g_{A^g_C(S)}(T_{ts}c))-\nabla g(u) \right\rangle.
\end{multline}

Since $O_c$ is  bounded, it follows from condition (2) in the definition of a Bregman function and Lemma \ref{LemmaBregProj} that
$\{P^g_{A^g_C(S)}(T_{ts}c): s\in S\}$  is also bounded.  Hence,
there exists $M>0$ such that $\|T_{ts}c\|+\|P^g_{A^g_C(S)}(T_{ts}c)\|\leq M$ for all $t, s\in S$.
It follows from \eqref{eq3} that
$$
\left\langle T_{ts}c-P^g_{A^g_C(S)}(T_{ts}c),\nabla g(y)-\nabla g(u) \right\rangle\leq M \| \nabla g(P^g_{A^g_C(S)}(T_{ts}c))-\nabla g(u)\|.
$$
Since $\mu$ is a mean on $X$, we have
\begin{equation*}
\begin{array}{rl}
\mu_t\left\langle T_{ts}c-P^g_{A^g_C(S)}(T_{ts}c),\nabla g(y)-\nabla g(u) \right\rangle&\leq\displaystyle\sup_{t\in S}\left\langle T_{ts}c-P^g_{A^g_C(S)}(T_{ts}c),\nabla g(y)-\nabla g(u) \right\rangle\\
&\leq M \displaystyle\sup_{t\in S}\| \nabla g(P^g_{A^g_C(S)}(T_{ts}c))-\nabla g(u)\|.
\end{array}
\end{equation*}
Since $\mu$ is right invariant, this implies
\begin{equation*}
\mu_t\left\langle T_{t}c-P^g_{A^g_C(S)}(T_{ts}c),\nabla g(y)-\nabla g(u) \right\rangle\leq M \displaystyle\sup_{t\in S}\| \nabla g(P^g_{A^g_C(S)}(T_{ts}c))-\nabla g(u)\|.
\end{equation*}
Because $P^g_{A^g_C(S)}(T_{s}c)$ converges strongly (in the index $s$) to $u$ and because
 $\nabla g$ is norm-to-norm continuous,  $\| \nabla g(P^g_{A^g_C(S)}(T_{ts}c))-\nabla g(u)\|\to 0$ uniformly on $t\in S$ when $s\to \infty$. Then, by taking the limit in $s$ we have $$\mu_t\left\langle T_{t}c-u,\nabla g(y)-\nabla g(u) \right\rangle\leq 0.$$
In other words,
$$
\left\langle z_\mu-u,\nabla g(y)-\nabla g(u) \right\rangle\leq 0, \quad\forall y\in A^g_C(S).
$$
Let $y=z_\mu$, we have $$0\geq\left\langle z_\mu-u,\nabla g(z_\mu)-\nabla g(u) \right\rangle=D_g(z_\mu,u)+D_g(u,z_\mu).$$
Hence $u=z_\mu$.
\end{proof}

\section{Bregman subspaces associated to the action}\label{sectionBregSubspace}

In this section, we study under what conditions on an action $(s,x)\mapsto T_s x$,
the coefficient functions in \eqref{BregCoefunction} belong to some classical function spaces  defined on $S$.
Since there are many well established amenability conditions concerning several classical function spaces on
semitopological semigroups, see, e.g.,
\cite{LauTaka95, LauZhang08, LauZhang16},
we can guarantee the existence of a LIM for the Bregman subspace $X$ and thus  utilize the results in section \ref{section3}.

Let $\operatorname{AP(S)}$ (resp.\ $\operatorname{WAP(S)}$) be the set of all \textit{almost periodic}
(resp.\ \textit{weakly almost periodic}) functions on $S$, i.e., all those functions $f\in\operatorname{CB(S)}$
such that its right orbit $\{r_tf: t\in S\}$ is precompact  in the norm (resp.\ weak) topology of $\operatorname{CB(S)}$.
Let $\operatorname{RUC(S)}$ be the set of all \textit{right uniformly continuous} on $S$, i.e.,
all those functions $f\in\operatorname{CB(S)}$ such that the map $s\to r_sf$ from $S$ into $\operatorname{CB(S)}$
is continuous when $\operatorname{CB(S)}$ is
equipped with the uniform norm topology. In general, $\operatorname{AP(S)}\subset\operatorname{WAP(S)}\subset\operatorname{CB(S)}$
and $\operatorname{AP(S)}\subset\operatorname{RUC(S)}\subset\operatorname{CB(S)}$.
When $S$ is compact, we have $\operatorname{AP}(S)=\operatorname{RUC}(S)\subset\operatorname{WAP(S)}\subset\operatorname{CB}(S)$.
The reader can see \cite{BJM88,Lau73Rocky,LauZhang08,MIT70} for more discussion about these function spaces and their amenability.

Let $c\in C$.
We call its orbit $O_c=\{T_sc: s\in S\}$ a \emph{continuous orbit} if the map $s\to T_sc$ is continuous in norm topology,
  a
\emph{nonexpansive orbit} if $\|T_sT_{t}c-T_sT_{t'}c\|\leq\|T_tc-T_{t'}c\|$ for all $s, t, t'\in S$,
and  a \emph{precompact orbit} if $O_c$ is norm precompact.  Note that $O_c$ will be precompact in the weak topology of the reflexive
Banach space $E$ whenever it is bounded.

\begin{Proposition}\label{BregSpaceAss}
	Let $(s,x)\mapsto T_s x$ be an action of a semitopological semigroup $S$ on a nonempty subset $C$ of a reflexive Banach space $E$.
Assume that there is a point $c\in C$ with a  bounded orbit $O_c$. Let $X$ be the Bregman subspace of
$\ell^\infty(S)$ associated to $O_c$ for the Bregman distance $D_g$. Then
	\begin{itemize}
		\item [(i)] $X\subset\operatorname{CB(S)}$ if $c$ has a continuous orbit;
		\item [(ii)] $X\subset\operatorname{RUC(S)}$ if $c$ has a continuous
nonexpansive orbit and the Bregman function $g$ is Lipschitz continuous on $O_c$;
		\item [(iii)] $X\subset\operatorname{AP(S)}$ if $c$ has a nonexpansive precompact orbit and $g$ is Lipschitz continuous on $O_c$;
		\item [(iv)] $X\subset\operatorname{WAP(S)}$ if each $T_s$ is weak-to-weak continuous and $g$ is weakly continuous on  $O_c$. 	
	\end{itemize}
\end{Proposition}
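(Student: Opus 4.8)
The plan is to exploit the minimality of the Bregman subspace $X$: recall that $X$ is the smallest left translation invariant subspace of $\ell^\infty(S)$ containing the constants together with the two families of generators $s\mapsto\langle T_sc,x^*\rangle$ ($x^*\in E^*$) and $s\mapsto D_g(T_sc,x)$ ($x\in E$). Since each of $\operatorname{CB}(S)$, $\operatorname{RUC}(S)$, $\operatorname{AP}(S)$ and $\operatorname{WAP}(S)$ is itself a left translation invariant linear subspace of $\ell^\infty(S)$ containing the constants, it suffices in each case to check that both families of generators lie in the target space $Y$; then $Y$ is one of the subspaces whose intersection defines $X$, whence $X\subseteq Y$. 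Thus the whole proposition reduces to membership statements for the two generators, and the four parts differ only in which hypotheses on the orbit and on $g$ are available.

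For (i), norm continuity of $s\mapsto T_sc$ together with continuity of $g$ gives at once that $s\mapsto\langle T_sc,x^*\rangle$ and $s\mapsto D_g(T_sc,x)=g(T_sc)-g(x)-\langle T_sc-x,\nabla g(x)\rangle$ are continuous; boundedness of $O_c$ and local boundedness of $g$ make them bounded, so they lie in $\operatorname{CB}(S)$. For (ii) the key is the estimate coming from nonexpansiveness: writing $r_tf(s)=f(st)$ and using $T_sT_tc=T_{st}c\in O_c$,
$$\|r_tf - r_{t'}f\|_\infty = \sup_s|\langle T_sT_tc - T_sT_{t'}c, x^*\rangle|\le\|x^*\|\,\sup_s\|T_sT_tc - T_sT_{t'}c\|\le\|x^*\|\,\|T_tc - T_{t'}c\|,$$
together with the analogous bound for the $D_g$-generator, where Lipschitz continuity of $g$ on $O_c$ with constant $L$ yields $\|r_tf-r_{t'}f\|_\infty\le(L+\|\nabla g(x)\|)\,\|T_tc-T_{t'}c\|$. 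Continuity of the orbit then forces $t\mapsto r_tf$ to be norm-continuous, i.e. each generator is in $\operatorname{RUC}(S)$.

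For (iii) I would package these same estimates as Lipschitz continuity of the two orbit-evaluation maps $\Phi,\Psi\colon O_c\to\operatorname{CB}(S)$ defined by $\Phi(w)(s)=\langle T_sw,x^*\rangle$ and $\Psi(w)(s)=D_g(T_sw,x)$; nonexpansiveness of the $T_s$ on $O_c$ and Lipschitzness of $g$ make both maps Lipschitz for the supremum norm. Since the right orbit of each generator is exactly the image $\Phi(O_c)$, respectively $\Psi(O_c)$, and a Lipschitz image of a norm-precompact set is norm-precompact, the right orbits are norm-precompact; combined with the continuity that also underlies their $\operatorname{CB}(S)$-membership, this places the generators in $\operatorname{AP}(S)$.

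Part (iv) is where I expect the real difficulty. Here nonexpansiveness is unavailable, so I would instead use reflexivity: the bounded orbit $O_c$, hence its weak closure $\overline{O_c}^{\,w}$, is weakly compact. Weak-to-weak continuity of each $T_s$ makes the orbit-evaluation map $\Phi$ (and, using weak continuity of $g$ on $O_c$, the map $\Psi$) continuous from $(\overline{O_c}^{\,w},\text{weak})$ into $\operatorname{CB}(S)$ equipped with the topology of pointwise convergence on $S$, since $w_\alpha\rightharpoonup w$ gives $\langle T_sw_\alpha,x^*\rangle\to\langle T_sw,x^*\rangle$ and $g(T_sw_\alpha)\to g(T_sw)$ for each fixed $s$. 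Consequently the right orbit $\Phi(O_c)$ has relatively pointwise-compact closure consisting of bounded continuous functions. I would then invoke Grothendieck's double-limit criterion, equivalently his theorem that for bounded subsets of $\operatorname{CB}(S)$ relative weak compactness coincides with relative compactness in the topology of pointwise convergence, to upgrade pointwise precompactness to weak precompactness, concluding membership in $\operatorname{WAP}(S)$. The delicate point, and the crux of the whole proposition, is precisely this upgrade: pointwise convergence on $S$ must be reconciled with weak convergence in $\operatorname{CB}(S)\cong C(\beta S)$, and because the $T_s$ need not be linear one cannot simply pass to adjoint operators; the interchange of the two iterated weak limits has to be justified by Eberlein–Šmulian weak sequential compactness in the reflexive space $E$ together with Grothendieck's interchange-of-limits lemma.
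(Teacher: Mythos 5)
Your proposal is correct and, in architecture, is the paper's own proof: the paper likewise reduces the proposition to showing that the generating families $f(s)=\left\langle T_sc,x^*\right\rangle$ and $h(s)=D_g(T_sc,x)$ lie in the target space (remarking that $s\mapsto D_g(x,T_sc)$ is handled similarly), proves (ii) with exactly your estimates $\|r_{t}f-r_{t'}f\|_\infty\leq\|x^*\|\,\|T_tc-T_{t'}c\|$ and the Lipschitz bound for $h$, and proves (iii) by pushing the precompact orbit through the evaluation map $z\mapsto D_g(T_sz,x)$, precisely your $\Psi$. Two differences deserve note. In (iii), the paper disposes of the linear generator $f$ by citing Lemma 3.1 of \cite{Lau73Rocky}, whereas you prove it directly via the Lipschitz map $\Phi$ and the identity $\{r_tf:t\in S\}=\Phi(O_c)$; this is a self-contained variant that costs nothing. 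In (iv), more substantively, the paper simply asserts that weak-to-weak continuity of $g$ and $T_s$ makes $L\colon O_c\to\operatorname{CB(S)}$ weak-to-weak continuous and concludes at once that $L(O_c)$, hence the right orbit, is weakly precompact; your route --- weak compactness of $\overline{O_c}^{\,w}$ in the reflexive space, continuity of the evaluation maps only into the topology of pointwise convergence on $S$, then Grothendieck's double-limit criterion (with Eberlein--\v{S}mulian supplying the sequential extractions) to upgrade to weak precompactness in $\operatorname{CB(S)}$ --- is the honest rendering of that one-line claim, since weak convergence of $T_sw_\alpha$ and of $g(T_sw_\alpha)$ only controls $L(w_\alpha)$ against point evaluations, which do not exhaust $\operatorname{CB(S)}^*$. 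So at the proposition's weakest point your writeup is more careful than the original, and you have correctly identified where the real content lies. For completeness: you share with the paper two residual glosses, namely that in (iii) no continuous orbit is hypothesized, so membership of the generators in $\operatorname{CB(S)}$ itself is not addressed by either argument, and in (iv) weak cluster points of $O_c$ need not lie in $C$, so applying $T_s$ and the weak continuity of $g$ at such points tacitly assumes an extension to the weak closure; these are features of the paper's statement and proof, not gaps you introduced.
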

\begin{proof}
	We show that, for each $x\in E$ and each $x^*\in E^*$, the  coefficient functions
	$$s\to f(s)=\left\langle T_sc,x^*\right\rangle\quad \text{and}\quad s\to h(s)=D_g(T_sc,x)$$
	 are in the corresponding function spaces. The proof for $s\to D_g(x,T_sc)$ is similar.

	{(i)} Trivial.
	
	{(ii)} Let $t_\lambda\rightarrow t$.  For each $s\in S$, we have
	\begin{equation*}
	\begin{array}{rl}
	|r_{t_\lambda}f(s)-r_tf(s)|&=|\left\langle T_{st_\lambda}c-T_{st}c,x^*\right\rangle|\\
	&\leq\|x^*\|. \|T_sT_{t_\lambda}c-T_sT_tc\|\\
	&\leq \|x^*\|. \|T_{t_\lambda}c-T_tc\|.
	\end{array}
	\end{equation*}
	Hence $\|r_{t_\lambda}f-r_tf\|\leq \|x^*\|. \|T_{t_\lambda}c-T_tc\|\rightarrow 0$ and thus $f\in\operatorname{RUC(S)}$.
	
Let $K>0$ be the Lipschitz constant of $g$.  Then $\|\nabla g(y)\|\leq K$ for all $y\in E$.  Observe
	\begin{equation*}
	\begin{array}{rl}
	|r_{t_\lambda}h(s)-r_th(s)|&=|D_g(T_{st_\lambda}c,x)-D_g(T_{st}c,x)|\\
	&=|g(T_sT_{t_\lambda}c)-g(T_sT_tc)-\left\langle T_sT_{t_\lambda}c-T_sT_tc,\nabla g(x)\right\rangle|\\
	&\leq 2K\|T_sT_{t_\lambda}c-T_sT_tc\|\\
	&\leq 2K\|T_{t_\lambda}c-T_tc\|.
	\end{array}
	\end{equation*}
	Hence $\|r_{t_\lambda}h-r_th\|\leq 2K\|T_{t_\lambda}c-T_tc\|\rightarrow 0$, and thus $h\in\operatorname{RUC(S)}$.
	
	{(iii)} It follows from \cite[Lemma 3.1]{Lau73Rocky} that $f\in \operatorname{AP(S)}$.
	
	We  show that the map $L:O_c\to\operatorname{CB(S)}$ given by $L(z)=\varphi_z$ is norm-to-norm continuous where
$\varphi_z(s)=D_g(T_sz,x)$. Indeed, let $\{z_n\}$ be a sequence converging to $z$ in $O_c$ in norm. For each $s\in S$, since $g$ is
$K$-Lipschitz we have
\begin{equation*}
\begin{array}{rl}
|D_g(T_{s}z_n,x)-D_g(T_{s}z,x)|&=|g(T_sz_n)-g(T_sz)-\left\langle T_sz_n-T_sz,\nabla g(x)\right\rangle|\\
&\leq 2K\|T_sz_n-T_sz\|\leq 2K\|z_n-z\|.
\end{array}
\end{equation*}
Hence $L$ is norm-to-norm continuous. Since $O_c$ is norm precompact, so is $L({O_c})$.
On the other hand, $r_th(s)=h(st)=D_g(T_s(T_tc),x)$ for each $t\in S$. Hence $\{r_th: t\in S\}$
is contained in the precompact subset $L({O_c})$ of $\operatorname{CB(S)}$. In particular, $h\in\operatorname{AP(S)}$.	

	{(iv)} As in (iii) we define the map $L:O_c\to \operatorname{CB(S)}$ by
$$
z\mapsto D_g(T_sz,x) = g(T_sz) - g(x) - \langle T_sz -x, \nabla(x)\rangle.
$$
Since both $g$ and $T_s$ are weak-weak continuous, so is $L$.  Consequently, $L(O_c)$ is weakly precompact.
Because $\{r_th: t\in S\}\subset L(O_c)$,  it is also weakly precompact in $\operatorname{CB(S)}$.
In particular, $h\in\operatorname{WAP(S)}$.
	In similar manner,  we see that $f\in\operatorname{WAP(S)}$.	
\end{proof}

\section{Further discussion}\label{problems}

In section \ref{section3}, we have seen that a common fixed point exists if $X$ has a $\operatorname{LIM}$. But,
sometimes we only have an \emph{approximate left invariant mean} instead, i.e., there is a net $\{\mu_\lambda\}$ of means on $X$ such that $\mu_\lambda (f-l_sf)\to0$  for every $s\in S$ and every $f\in X$.  In this situation, we ask the following questions.

\begin{itemize}
	\item [\textbf{Q1:}] Do we have a common fixed point?
	\item [\textbf{Q2:}] When a fixed point exist, how do we locate it?
\end{itemize}

\begin{Proposition}
Let $C$ be a nonempty closed convex subset of a reflexive Banach space $E$.
Let $(s,x)\mapsto T_s x$ be an action of a semitopological semigroup $S$ on $C$ with a bounded continuous orbit $O_c$.
	Let $\{\mu_\lambda\}$ be an approximate left invariant mean on the Bregman subspace $X$ associated to
$O_c$. Then
	\begin{itemize}
		\item [(i)] any weak* cluster point $\mu$ of $\{\mu_\lambda\}$ is a left invariant mean and $z_{\mu_\lambda}\to  z_\mu$ weakly;
		\item [(ii)] if $\mu_\lambda\rightarrow \mu$ in norm then $z_{\mu_\lambda}\rightarrow z_\mu$ in norm.
	\end{itemize}
\end{Proposition}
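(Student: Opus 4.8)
The plan is to exploit two structural facts: the set of means on $X$ is a weak* compact convex subset of $X^*$, and every coordinate function $s\mapsto\langle T_sc,x^*\rangle$ belongs to $X$ by the very construction of the Bregman subspace, so that the barycenter depends affinely and weak*-to-weak continuously on the mean via \eqref{PsiFunctional}. Both parts then reduce to tracking this dependence under, respectively, weak* and norm convergence of the means.

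For part (i), I would first recall that the means on $X$ form the weak* compact set $\{\nu\in X^*:\nu(1)=1,\ \|\nu\|\le1\}$ (here $\|\nu\|=\nu(1)=1$ is forced since $1\in X$ with $\|1\|_\infty=1$); in particular weak* cluster points of $\{\mu_\lambda\}$ exist and are again means. Fix such a cluster point $\mu$ and a subnet $\{\mu_{\lambda_\gamma}\}$ with $\mu_{\lambda_\gamma}\to\mu$ weak*. To see that $\mu$ is left invariant, fix $s\in S$ and $f\in X$; since $X$ is left translation invariant, $f-l_sf\in X$, so weak* convergence gives $\mu(f-l_sf)=\lim_\gamma\mu_{\lambda_\gamma}(f-l_sf)$, while the defining property of an approximate left invariant mean forces $\mu_{\lambda_\gamma}(f-l_sf)\to0$. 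Hence $\mu(f)=\mu(l_sf)$ and $\mu$ is a $\operatorname{LIM}$. For the barycenters, fix $x^*\in E^*$; the function $s\mapsto\langle T_sc,x^*\rangle$ lies in $X$, so by \eqref{PsiFunctional} and weak* convergence along the subnet,
$$
\langle z_{\mu_{\lambda_\gamma}},x^*\rangle=(\mu_{\lambda_\gamma})_s\langle T_sc,x^*\rangle\to\mu_s\langle T_sc,x^*\rangle=\langle z_\mu,x^*\rangle.
$$
As $x^*$ is arbitrary, $z_{\mu_{\lambda_\gamma}}\to z_\mu$ weakly.

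For part (ii), I would estimate the barycenters directly in the dual pairing. Since $O_c$ is bounded, put $M:=\sup_{s\in S}\|T_sc\|<\infty$; then for every $x^*\in E^*$ the coordinate function obeys $\|s\mapsto\langle T_sc,x^*\rangle\|_\infty\le M\|x^*\|$. Using \eqref{PsiFunctional},
$$
|\langle z_{\mu_\lambda}-z_\mu,x^*\rangle|=|(\mu_\lambda-\mu)_s\langle T_sc,x^*\rangle|\le\|\mu_\lambda-\mu\|\cdot M\|x^*\|,
$$
and taking the supremum over $\|x^*\|\le1$ yields $\|z_{\mu_\lambda}-z_\mu\|\le M\|\mu_\lambda-\mu\|\to0$, so $z_{\mu_\lambda}\to z_\mu$ in norm.

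The computations are routine, and I do not anticipate a genuine obstacle; the only point deserving care is the interchange of limits in part (i), which rests on the memberships $f-l_sf\in X$ and $s\mapsto\langle T_sc,x^*\rangle\in X$, both guaranteed by the definition of the Bregman subspace together with its left translation invariance. I would also flag that, strictly speaking, the weak convergence of the barycenters in (i) is along the subnet selecting the cluster point $\mu$ (equivalently, $z_\mu$ is a weak cluster point of $\{z_{\mu_\lambda}\}$, and the convergence holds along the full net precisely when $\mu_\lambda\to\mu$ weak*), since $z_\mu$ is determined only once $\mu$ has been fixed.
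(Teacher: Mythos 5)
Your proof is correct and takes essentially the same route as the paper's: weak* compactness of the unit ball of $X^*$ for the existence of cluster points, passing $\mu_{\lambda}(f-l_sf)\to 0$ to the limit for left invariance (a detail the paper delegates to a citation of Takahashi), and in (ii) the identical estimate $\|z_{\mu_\lambda}-z_\mu\|\leq M\|\mu_\lambda-\mu\|$ with $M=\sup_{t\in S}\|T_tc\|$ via the pairing \eqref{PsiFunctional}. Your closing caveat---that in (i) the weak convergence of the barycenters is, strictly speaking, along the subnet realizing the cluster point $\mu$---is a correct and worthwhile precision that the paper's terser statement leaves implicit.
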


\begin{proof}
{(i)} We note that there always exists a weak* cluster point of $\{\mu_\lambda\}$ since the closed unit ball of $X^*$ is weak* compact.
It follows similarly as \cite[page 883]{Taka92} that $\mu$ is a left invariant mean on $X$. By the definition of $z_\mu$
in \eqref{PsiFunctional}, $z_{\mu_\lambda}\to  z_\mu$ weakly.
	
{(ii)}
	If $\mu_\lambda$ converges to $\mu$ in norm then
	\begin{equation*}
	\begin{array}{rl}
	\|z_{\mu_\lambda}-z_\mu\|&=\displaystyle\sup_{\|x^*\|\leq 1}|\left\langle z_{\mu_\lambda},x^*\right\rangle-\left\langle z_\mu,x^*\right\rangle|\\
	&=\displaystyle\sup_{\|x^*\|\leq 1}|{\mu_\lambda}_t\left\langle T_tc,x^*\right\rangle-\mu_t\left\langle T_tc,x^*\right\rangle|\\
	&\leq M \| \mu_\lambda-\mu\|,
	\end{array}
	\end{equation*}
	where $M=\displaystyle\sup_{t\in S} \|T_tc\|$. This implies $z_{\mu_\lambda}$ converges to $z_\mu$ in norm.
\end{proof}

We end this paper with an open problem about a possible variance of our results.
Let $X$ be a subspace of $\ell^\infty(S)$ containing all constant functions. A real valued function $\mu$ on $X$ is called a \textit{submean}, see e.g. \cite{LauTaka95,LauZhang2018Annales}, if
\begin{itemize}
	\item [(1)] $\mu(f+g)\leq \mu(f)+\mu(g)$ for all $f,g\in X$;
	\item [(2)] $\mu(\alpha f)=\alpha\mu(f)$ for all $f\in X$ and $\alpha\geq 0$;
	\item [(3)] $f,g\in X$ and $f\leq g$ imply $\mu(f)\leq\mu(g)$;
	\item [(4)] $\mu(c)=c$ for every constant function $c$.
\end{itemize}
By \cite[proposition 3.6]{LauTaka95}, if $S$ is a left reversible semitopological semigroup, then $\operatorname{CB(S)}$ always has a left invariant submean, while, in general, it might not have any left invariant mean.
\begin{question}
	Do we have similar results as in the Theorems \ref{mainThm}, \ref{mainThm2} and \ref{mainthm3} when $X$ has a  (left) invariant submean?
\end{question}

Here is the difficulty.  Since we do not have the linearity of a submean $\mu$, the functional on $E^*$ defined by
 $x^*\mapsto\mu_s\left\langle T_sc,x^*\right\rangle$ in \eqref{PsiFunctional}  is not linear.
 Therefore, we are unable to define the $\mu-$barycenter $z_\mu$
  by the formula $\mu_s\left\langle T_sc,x^*\right\rangle=\left\langle z_\mu,x^*\right\rangle$ for all $x^*\in E^*$.

\section*{Acknowledgement}

This research is supported by the Taiwan MOST grant 108-2115-M-110-004-MY2.

\end{document}